\newtheorem{thm}{Theorem}[section]
\newtheorem{lemma}[thm]{Lemma}
\newtheorem{cor}[thm]{Corollary}
\newtheorem{prop}[thm]{Proposition}
\newtheorem{rem}[thm]{Remark}
\newtheorem{exa}[thm]{Example}
\newtheorem{dfn}[thm]{Definition}
\def\Tor{\textnormal{Tor}}
\title{On the Betti numbers of some semigroup rings}
\thanks{ The second author was supported was supported by the CNCSIS-UEFISCDI project PN II-RU PD 23/06.08.2010 and by the strategic grant POSDRU/89/1.5/S/58852, Project ``Postdoctoral program for training scientific researchers" co-financed by the European Social Fund within the Sectorial Operational Program Human Resources Development 2007 -- 2013.
}
\author{Vincenzo Micale }
\address{Dipartimento di Matematica, Universit\`a di Catania,} \email{vmicale@dmi.unict.it} 
\author{Anda Olteanu}
\address{Faculty of Mathematics and Computer Science, Ovidius University, Bd.\ Mamaia 124,
 900527 Constanta, Romania,} \email{olteanuandageorgiana@gmail.com}
\begin{document}
\maketitle
\begin{abstract}
For any numerical semigroup $S$, there are infinitely many numerical symmetric semigroups $T$ such that $S=\frac{T}{2}$ (for the definition of $\frac{T}{2}$ see below) is their half. We are studying the Betti numbers of the numerical semigroup ring $K[T]$ when $S$ is a $3$-generated numerical semigroup or telescopic. We also consider  $4$-generated symmetric semigroups and the so called $4$-irreducible numerical semigroups.\\
Keywords: numerical semigroups, Betti numbers, semigroup rings, simplicial complexes.\\
MSC: 13D02; 20M25
\end{abstract}

\section{Introduction}
Let $S=\langle n_1, n_2,\dots,n_k\rangle$ be a numerical semigroup. A subalgebra of $K[t]$ generated by a finite number of monomials $t^{n_1},\dots,t^{n_k}$ is called a {\em positive affine semigroup ring} associated to $S$ and it is denoted by $K[S]=K[t^{n_1},\dots,t^{n_k}]$. Then $K[S]$ is graded and for each $\alpha\in\mathbb N$, $\dim_K K[S]_{\alpha}$ is $1$ or $0$, depending of if $\alpha\in S$ or not. We can write $K[S]=K[X_1,\dots,X_k]/I=A/I$, $\deg X_i=\alpha_i$, where $I$ is generated by those binomials $X^\beta-X^\gamma$ for which $\sum_i\alpha_i\beta_i=\sum_i\alpha_i\gamma_i$. The Koszul complex on $X_1,\dots,X_k$ is a minimal resolution of $K$. Tensoring with $K[S]$ and taking the $i$-th homology gives $\Tor_i^A(K[S],K)$. This is graded with $\Tor_i^A(K[S],K)=\oplus_{s\in S}\Tor_i^A(K[S],K)_s$ and the Betti numbers of $K[S]$ are given by $\beta_{i,s}(K[S])=\dim_K\Tor_i^A(K[S],K)_s$.

Not much it is known in general about $\beta_{i,s}(K[S])$. In \cite{He} the author finds the $\beta_{i,s}(K[S])$ for $S$ a $3$-generated not symmetric and in \cite{B} the author studies the Betti numbers for the symmetric not complete intersection $4$-generated case. A good reference on the homology of $K[S]$ is \cite{F}.

In this paper we are interested on $\beta_{i,s}(K[T])$ where $T$ is any (among the infinitely many) numerical semigroups for which $S$ is its half.
Furthermore we study $\beta_{i,s}(K[S])$ for the case when $S$ is a $4$-irreducible numerical semigroup (see the definition below).

In Section 2 we give some preliminaries about numerical semigroups. In particular we define the concept of half of a semigroup and we show how to find the infinitely many numerical semigroups for which a fixed semigroup $S$ is their half (see Proposition~\ref{half}). Finally we relate the  
$\beta_{i,s}(K[S])$ to the simplicial complex $\Delta_s$ associated to $S$ (see Proposition~\ref{complex}).

In Section 3 we concentrate on the case when $S$ is telescopic and we find and compare the Betti numbers of $K[T_1]$ and $K[T_2]$, where $T_1$ and $T_2$ are any two numerical semigroups for which $S$ is their half (see Proposition~\ref{Bettitel} and Corollary~\ref{b}). We can apply the previous result to all the three generated symmetric semigroups, since all of them are telescopic (see Remark~\ref{c}).

In Section 4 we concentrate on the case when $S$ is $3$-generated not symmetric and we find and compare the Betti numbers of $K[T_1]$ and $K[T_2]$ (excluded the second Betti numbers), where $T_1$ and $T_2$ are any two numerical semigroups for which $S$ is their half (see Proposition~\ref{9}).
  
In Section 5 we consider the case of $4$-generated symmetric, but not complete intersection semigroup $S$ and we determine the total Betti numbers of $K[T]$ where $T$ is any numerical semigroup for which $S$ is its half (see Proposition~\ref{4sym}).

Finally in Section 6 we determine $\beta_{i,s}(K[S])$ when $S$ is $4$-irreducible (see Remark~\ref{4irr}).

\section{Preliminaries}
We start this section by recalling some well known facts on numerical
semigroups and semigroup rings. For more details see \cite{BDF} and \cite{RGS} for instance.

A subsemigroup $S$ of the monoid of natural numbers $(\mathbb
N,+)$, such that $0\in S$, is called a {\em numerical semigroup}.
Each numerical semigroup $S$ has a natural partial ordering $\leq
_S$ where, for every $s$ and $t$ in $S$, $s\leq _S t$ if there is
an $u\in S$ such that $t=s+u$. The set $\{n_i\}$ of the minimal
elements in $S\setminus \{0\}$ in this ordering is called {\em the
minimal set of generators} for $S$. In fact all elements of $S$
are linear combinations of minimal elements, with non-negative
integers coefficients. Note that the minimal set $\{n_i\}$ of
generators is finite since for any $s\in S$, $s\neq 0$, we have
that $n_i$ is not congruent to $n_j$ modulo $s$.

A numerical semigroup $S$ generated by $n_1<n_2<\dots<n_k$ is denoted
by $\langle n_1, n_2,\dots,n_k\rangle$. Since $\langle n_1,n_2,\dots,n_k\rangle$ is isomorphic to $\langle
dn_1,dn_2,\dots,dn_k\rangle$ for any $d\in \mathbb N\setminus
\{0\}$, we assume, in the sequel, that $\gcd
(n_1,n_2,\dots,n_k)=1$. It is well known that this condition is
equivalent to $| \mathbb N\setminus S |<\infty$. Hence there is a
well defined integer $g(S)=\max \{x\in \mathbb Z\mid x\notin
S\}$, called the {\em Frobenius number} of $S$.

We denote by $T(S)$ the set $\{x\in\mathbb
Z\setminus S\mid x+s\in S\ \mbox{for every}\ s\in
S\setminus\{0\}\}$, sometimes called the {\em set of pseudo-Frobenius numbers}. 
Of course $g(S)\in T(S)$ for every $S$.
The cardinality of $T(S)$ is the {\em type} $t(S)$ of $S$. A numerical semigroup $S$ is called {\em symmetric}
if $T(S)=\{g(S)\}$, that is $S$ is symmetric if and only if its type is one.

Set $\frac{S}{2}=\{x\in \mathbb N\ |\ 2x\in S\}$. It easy to see that $\frac{S}{2}$ is a semigroup containing $S$, called the {\em half} of $S$.

\begin{prop}\cite[Corollary 6.8]{RGS}\label{half}
Let $S$ be a numerical semigroup. Then there exist infinitely many symmetric numerical semigroups $T$ such that $S=\frac{T}{2}$.
\end{prop}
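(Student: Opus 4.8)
The plan is to exploit a simple parity observation: since $2x$ is always even, we have $x\in\frac{T}{2}$ if and only if $2x$ belongs to the set of \emph{even} elements of $T$. Thus $\frac{T}{2}=S$ holds precisely when the even part of $T$ equals $2S=\{2s:s\in S\}$, while the odd elements of $T$ are unconstrained beyond forcing $T$ to be a numerical semigroup. So I would search for $T$ of the shape $T=2S\cup O$ with $O$ a set of odd integers, choose $O$ so that $T$ is symmetric, and then exhibit infinitely many admissible $O$. The naive choice $O=a+2S$ with $a\in S$ odd does give a numerical semigroup, but a direct check of the symmetry criterion shows such a $T$ is symmetric only when $S$ itself already is; overcoming this is the main obstacle, and the remedy is to make the odd part a translate of the \emph{canonical ideal} $K=\{x\in\mathbb Z:g(S)-x\notin S\}$, so that the Frobenius reflection interchanges the even and odd halves.

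Concretely, I would first record the elementary properties of $K$, each of which follows in one line from the definition and from the fact that $S$ contains every integer exceeding $g(S)$: one has $K\subseteq\mathbb N$ with $0\in K$, the set $K$ is cofinite, and $S+K\subseteq K$. Then, for any odd integer $b>g(S)$, I would set
\[
T_b \;=\; 2S\,\cup\,(2K+b),\qquad 2K+b=\{2x+b:x\in K\}.
\]
To see that $T_b$ is a numerical semigroup I check closure in the three parity cases: even$+$even and even$+$odd are immediate from $S+S\subseteq S$ and $S+K\subseteq K$, while odd$+$odd gives $(2K+b)+(2K+b)=2(K+K+b)$, which lands in $2S$ because $K+K\subseteq\mathbb N$ forces every element of $K+K+b$ to exceed $g(S)$ and hence to lie in $S$. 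This is exactly where the hypothesis $b>g(S)$ is used. Cofiniteness is clear since $2S$ and $2K+b$ contain all sufficiently large even, respectively odd, integers, and $0\in 2S$.

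Because $b$ is odd, $2K+b$ consists of odd numbers, so the even part of $T_b$ is precisely $2S$ and the opening remark yields $\frac{T_b}{2}=S$. For symmetry I would use the standard criterion that a numerical semigroup $N$ is symmetric iff $z\in N\iff g(N)-z\notin N$ for all $z\in\mathbb Z$. Computing the gaps, the largest odd gap is $b+2g(S)$ and it dominates the largest even gap $2g(S)$, so $g(T_b)=b+2g(S)$. I then verify the criterion by parity: when $z$ is even the reflection $g(T_b)-z$ is odd, and the equivalence collapses, through the definition of $K$, to the tautology $w\in S\iff w\in S$; when $z$ is odd the reflection is even, and the equivalence collapses to $x\in K\iff x\in K$. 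In both directions it is the defining property $x\in K\iff g(S)-x\notin S$ that makes $2S$ and $2K+b$ reflect onto one another.

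Finally, since $g(T_b)=b+2g(S)$ is strictly increasing in $b$, distinct odd values $b>g(S)$ produce symmetric numerical semigroups with distinct Frobenius numbers, hence infinitely many distinct $T$ with $\frac{T}{2}=S$, completing the argument. The only genuinely non-routine step is the very choice of the odd part: recognizing that the canonical ideal $K$ is the correct ``odd shadow'' of $S$, so that the symmetry reflection swaps the even block $2S$ with the odd block $2K+b$, is the crux. Once $K$ is in hand, every remaining verification is a short parity computation.
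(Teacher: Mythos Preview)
Your argument is correct. The paper does not prove this proposition itself but cites Rosales--Garc\'ia S\'anchez and then records their construction: for odd $f\geq 3g(S)+1$ one takes $T=\langle 2n_1,\dots,2n_k,\,f-2g_1,\dots,f-2g_t\rangle$, where $T(S)=\{g_1,\dots,g_t\}$ is the set of pseudo-Frobenius numbers of $S$. Your semigroup $T_b$ is in fact the \emph{same} object under the change of parameter $f=b+2g(S)$: one has $f-2g_i=b+2(g(S)-g_i)$, and since every gap of $S$ lies below some pseudo-Frobenius number one checks $K=\bigcup_{g_i\in T(S)}\bigl(g(S)-g_i+S\bigr)$, so the odd part $2K+b$ coincides with $\bigcup_i\bigl((f-2g_i)+2S\bigr)$, which is the odd part of the cited $T$ once one observes that any sum of two odd generators already lands in $2S$.

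What differs is the packaging and the verification. You describe $T$ as a set and prove symmetry directly via the reflection $z\mapsto g(T_b)-z$, which makes the role of the canonical ideal transparent and reduces everything to two one-line parity checks; the cited approach instead names explicit generators. Your route is cleaner for establishing the proposition in isolation, while the generator description is precisely what the remainder of the paper exploits when computing the Betti numbers of $K[T]$.
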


\medskip

In the proofs of \cite[Theorem 6.7]{RGS}) and of last proposition it is explained how to choose the infinitely many $T$ for every $S=\langle n_1, n_2,\dots,n_k\rangle$. If $T(S)=\{g_1,\dots, g_t\}$ and $f$ is an odd integer greater than or equal to $3g(S)+1$, then it is shown that $T=\langle 2n_1, 2n_2,\dots,2n_k, f-2g_1,\dots,f-2g_t\rangle$ is a symmetric numerical semigroup with Frobenius number $f$ and $S=\frac{T}{2}$.

\medskip

For $s\in S=\langle n_1, n_2,\dots,n_k\rangle$, let $\Delta_s$ be the simplicial complex with faces $\{n_{i_1},\dots, n_{i_j}\}$ such that $s-(n_{i_1}+\cdots+n_{i_j})\in S$. 

The following proposition relates the graded Betti numbers of $K[S]$ with the dimension of the reduced homology of $\Delta_s$ over $K$.

\begin{prop}\cite[Lemma 1.1]{CM}\label{complex}
$\beta_{i,s}(K[S])=\dim_K\tilde{H}_{i-1}(\Delta_s,K)$.
\end{prop}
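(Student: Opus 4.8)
The plan is to compute $\Tor_i^A(K[S],K)$ directly from the Koszul complex, as indicated in the Introduction, and then to match its graded strands with the augmented chain complex of $\Delta_s$. Write $A=K[X_1,\dots,X_k]$ with $\deg X_i=n_i$, and let $K_\bullet$ be the Koszul complex on $X_1,\dots,X_k$, which is a minimal graded free resolution of $K$ over $A$. Then $\beta_{i,s}(K[S])=\dim_K\Tor_i^A(K[S],K)_s=\dim_K H_i(K_\bullet\otimes_A K[S])_s$. Tensoring $K_\bullet$ with $K[S]$ produces the Koszul complex on the images $t^{n_1},\dots,t^{n_k}$ of the variables, whose $j$-th term $\bigwedge^j(K[S])^k$ has a $K[S]$-basis of wedges $e_{i_1}\wedge\cdots\wedge e_{i_j}$ indexed by $j$-subsets $\{n_{i_1},\dots,n_{i_j}\}$, each of internal degree $n_{i_1}+\cdots+n_{i_j}$.

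Next I would isolate the degree-$s$ strand and read off its $K$-basis. The decisive point is that $K[S]=\bigoplus_{w\in S}Kt^w$ is one-dimensional in each degree $w\in S$ and zero otherwise. Hence, for a fixed $j$-subset, the element $t^w\cdot(e_{i_1}\wedge\cdots\wedge e_{i_j})$ lies in degree $s$ for exactly one choice of $w$, namely $w=s-(n_{i_1}+\cdots+n_{i_j})$, and it is nonzero precisely when this $w$ belongs to $S$, that is, precisely when $\{n_{i_1},\dots,n_{i_j}\}$ is a face of $\Delta_s$. Thus the degree-$s$ part of the $j$-th Koszul term has a $K$-basis indexed by the $(j-1)$-dimensional faces of $\Delta_s$, with the $j=0$ term (carrying the unique monomial $t^s$) corresponding to the empty face, since $s\in S$. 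This identifies $(K_\bullet\otimes_A K[S])_s$, as a graded vector space, with the augmented oriented chain complex of $\Delta_s$ shifted by one, homological degree $j$ corresponding to simplicial dimension $j-1$.

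It then remains to check that the differentials agree. The Koszul differential sends $e_{i_1}\wedge\cdots\wedge e_{i_j}$ to $\sum_{l}(-1)^{l+1}t^{n_{i_l}}\,e_{i_1}\wedge\cdots\widehat{e_{i_l}}\cdots\wedge e_{i_j}$; applied to the basis element carrying the monomial $t^w$ of a face $\sigma=\{n_{i_1},\dots,n_{i_j}\}$, each summand carries the monomial $t^{w+n_{i_l}}$, which is exactly the monomial attached to the facet $\sigma\setminus\{n_{i_l}\}$ in degree $s$. Hence, under the identification above, the Koszul differential is precisely the simplicial boundary map of $\Delta_s$, up to the uniform sign convention for orienting faces. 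Taking homology yields $H_j(K_\bullet\otimes_A K[S])_s\cong\tilde H_{j-1}(\Delta_s,K)$, and comparing dimensions gives the claim.

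The part requiring the most care is the bookkeeping at the bottom of the complex: one must confirm that homological degree $j=0$ (the term $K[S]$ itself, contributing $t^s$) plays the role of the augmentation term supported on the empty simplex, so that the resulting homology is the \emph{reduced} homology $\tilde H_{\bullet}$ rather than ordinary homology, and that the sign conventions in the Koszul and simplicial boundary maps can be matched consistently across all $j$.
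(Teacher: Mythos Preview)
Your proof is correct and follows exactly the route taken in the paper: compute $\Tor_i^A(K[S],K)$ via the Koszul complex tensored with $K[S]$, and identify the degree-$s$ strand with the augmented chain complex of $\Delta_s$ shifted by one. The paper's own proof is a two-sentence sketch of precisely this argument; you have simply supplied the details (explicit bases, matching of differentials, and the augmentation bookkeeping) that the paper leaves to the reader.
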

\begin{proof} As we wrote in the Introduction, $K[S]=K[X_1,\dots,X_k]/I$ and the Koszul complex on $X_1,\dots,X_k$ is a minimal resolution of $K$; tensoring with $K[S]$ and taking the $i$-th homology gives $\Tor_i^A(K[S],K)$. Then it is enough to note that the $s$-graded part of the Koszul complex tensored with $K[S]$ is isomorphic to the chain complex of $\Delta_s$ shifted by one.
\end{proof}

\section{Telescopic semigroups}

Due to the fact that telescopic semigroups have a nice structure, they have been intensively studied, \cite{KP}.
We are interested in computing the Betti numbers of $K[T]$, when $T$ is a symmetric numerical semigroup such that $S$ is its half and $S$ is a telescopic semigroup. 

\begin{dfn}\rm Let $(n_1,\ldots,n_k)$ be a sequence of positive integers with $n_1<n_2<\dots<n_k$ and such that their greatest common divisor is $1$. Define
$d_i=\gcd(n_1,\ldots,n_i)$ and $A_i=\{n_1/d_i,\ldots,n_i/d_i\}$ for $i =1,\ldots,k$. Let $S_i$ be the semigroup generated by $A_i$. If $n_i/d_i\in S_{i-1}$ for $i = 2,\ldots,k$, we call the sequence $(n_1,\ldots,n_k)$ \textit{telescopic}. A numerical semigroup is \textit{telescopic} if it is generated by a telescopic sequence.
\end{dfn}

\begin{exa}\rm
It is easy to check that $S=\langle 6,10,11\rangle$ is telescopic.
\end{exa}

\begin{rem}\rm
We note that the telescopic semigroups are symmetric (see \cite[Lemma 6.5]{KP} for instance).
\end{rem}

\begin{lemma}\label{a} 
Let $S=\langle n_1,\ldots,n_k\rangle$ be a telescopic semigroup and $f\geq 3g(S)+1$ an odd number. Then $T=\langle 2n_1,\ldots,2n_k,f-2g(S)\rangle$ is a telescopic semigroup.
\end{lemma}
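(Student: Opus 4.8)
The plan is to produce an explicit telescopic sequence generating $T$, obtained by appending the new generator $c:=f-2g(S)$ to the doubled generators. Write $d_i=\gcd(n_1,\dots,n_i)$, $A_i=\{n_1/d_i,\dots,n_i/d_i\}$ and $S_i=\langle A_i\rangle$ for the data attached to the telescopic sequence $(n_1,\dots,n_k)$. I would first dispose of two preliminary facts. Since $d_k=1$ and $f$ is odd, $c$ is odd, so $\gcd(2n_1,\dots,2n_k,c)=\gcd(2d_k,c)=\gcd(2,c)=1$; thus $T$ has a coprime generating set. Moreover $c=f-2g(S)\ge (3g(S)+1)-2g(S)=g(S)+1$, so $c$ lies strictly above the Frobenius number of $S$ and therefore $c\in S$.

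The core of the argument is a scaling observation showing that the first $k$ telescopic tests for the candidate sequence $(2n_1,\dots,2n_k,c)$ reduce verbatim to those for $S$. For $2\le i\le k$ the partial gcd of $2n_1,\dots,2n_i$ equals $2d_i$, the normalized $i$-th generator is $2n_i/(2d_i)=n_i/d_i$, and the normalized truncation through the $(i-1)$-th stage is $\{2n_j/(2d_{i-1})\}_{j<i}=A_{i-1}$, which generates $S_{i-1}$. Hence the telescopic condition at stage $i$ is exactly $n_i/d_i\in S_{i-1}$, which holds because $(n_1,\dots,n_k)$ is telescopic. At the last stage the partial gcd drops to $\gcd(2d_k,c)=1$, the normalized truncation of the $2n_j$ becomes $\{n_1,\dots,n_k\}$ generating $S$, and the condition reads precisely $c\in S$, already verified above.

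The main obstacle is the ordering requirement hidden in the definition: a telescopic sequence must be strictly increasing, so the clean form of the computation above presumes that $c$ is the largest generator, i.e. $c>2n_k$. I would therefore isolate the inequality $f-2g(S)>2n_k$, equivalently $f>2g(S)+2n_k$, as the real content to be checked. The hypothesis only gives $c\ge g(S)+1$, so one must compare $g(S)$ with $2n_k$; for this I would use the known closed form of the Frobenius number of a telescopic semigroup, $g(S)=-n_1+\sum_{i=2}^{k}\bigl(\tfrac{d_{i-1}}{d_i}-1\bigr)n_i$ (see \cite{KP}), to determine when $g(S)+1$ already exceeds $2n_k$ and, failing that, to see how large $f$ must be taken so that $c$ becomes the top generator. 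Once $c$ is placed as the final, strictly largest entry, the scaling observation completes the verification that $(2n_1,\dots,2n_k,c)$ is telescopic, and hence that $T$ is a telescopic semigroup.
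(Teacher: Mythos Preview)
Your core argument is the paper's argument: reduce the first $k$ telescopic checks for $(2n_1,\dots,2n_k,c)$ to those for $(n_1,\dots,n_k)$ by the obvious scaling, and verify the last check by showing $c\in S$. Your justification $c\ge g(S)+1$ is in fact a bit cleaner than the paper's, which instead invokes $c\in T\subseteq \frac{T}{2}=S$.

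Where your proposal goes beyond the paper is in flagging the ordering constraint $2n_k<c$ built into the paper's definition of a telescopic sequence. You are right that the paper's proof silently ignores this, and you are right that it is not automatic: for $S=\langle 2,3\rangle$ one has $g(S)=1$, so $f=7$ is allowed, giving $c=5<6=2n_k$. Your proposed remedy, however, does not close the gap: invoking the closed form for $g(S)$ will not yield $g(S)+1>2n_k$ in general, and ``see how large $f$ must be taken'' amounts to strengthening the hypothesis rather than proving the lemma as stated. So as written your argument is incomplete for exactly the same reason the paper's is.

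The clean way out is to observe that the membership conditions $n_i/d_i\in S_{i-1}$ make perfect sense, and your scaling argument goes through verbatim, without any monotonicity assumption on the sequence; the strict increase in the paper's definition is cosmetic and is not imposed in the source \cite{KP}. If you prefer to keep the paper's definition literally, note that the only use of the lemma is to conclude via \cite[Lemma~1]{Wa} that $K[T]$ is a complete intersection, and that argument requires only $d_{k+1}=1$ together with $c\in S_k$ and the inductive complete--intersection structure of $S$, none of which needs $c>2n_k$. Either way, drop the attempt to force $c$ to be the largest generator; that is the one step in your outline that cannot be completed under the stated hypothesis on~$f$.
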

\begin{proof} We have to show that $(2n_1,\ldots,2n_k,f-2g(S))$ is a telescopic sequence. The definition is verified by any subsequence of $(2n_1,\ldots,2n_k)$ since the sequence $(n_1,\ldots,n_k)$ is telescopic by hypothesis. Let $d_{k+1}=\gcd(2n_1,\ldots,2n_k,f-2g(S))$, then $d_{k+1}=1$. Since $T_{k}=S$ and $f-2g(S)\in T\subseteq \frac{T}{2}=S$, the statement follows.
\end{proof}

A special class of numerical semigroups is that of complete intersection. A numerical semigroup $S$ is a {\em complete intersection semigroup} if $K[S]$ is a complete intersection ring. For a purely numerical definition of $S$ complete intersection see \cite[page 129]{RGS}.


\begin{rem}\rm According to \cite[Lemma 1]{Wa}, the semigroup $T$ of Lemma~\ref{a} is a complete intersection semigroup.
\end{rem}

\begin{prop}\label{Bettitel} Let $S=\langle n_1,\ldots,n_k\rangle$ be a telescopic semigroup and $f\geq 3g(S)+1$ an odd number. Let $T_1=\langle 2n_1,\ldots,2n_k,f-2g(S)\rangle$ and $T_2=\langle 2n_1,\ldots,2n_k,f+2-2g(S)\rangle$. Then 
\[\beta_{1,j}(K[T_2])=
\left\{\begin {array}{ll}
			\beta_{1,j}(K[T_1]), & \mbox{ if}\ j\notin\{2(f-2g(S)),2(f-2g(S))+4\},\\
			0, & \mbox{ if}\ j=2(f-2g(S)),\\
			\beta_{1,j}(K[T_1])+1,& \mbox{ if}\ j=2(f-2g(S))+4.
	\end{array}\right. \]
\end{prop}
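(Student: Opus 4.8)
\emph{Proof proposal.} The plan is to reduce the computation to counting the minimal binomial relations of $K[T_1]$ and $K[T_2]$ in each degree. By Lemma~\ref{a} the sequence $(2n_1,\ldots,2n_k,f-2g(S))$ is telescopic, and the same argument applied with the odd integer $f+2\ge 3g(S)+1$ shows that $(2n_1,\ldots,2n_k,f+2-2g(S))$ is telescopic as well; hence, by the remark following Lemma~\ref{a}, both $K[T_1]$ and $K[T_2]$ are complete intersections. For a complete intersection the minimal free resolution is the Koszul complex on a minimal generating set of the defining ideal, so $\beta_{1,j}(K[T_\ell])$ equals the number of minimal binomial generators of the defining ideal of $K[T_\ell]$ of degree $j$. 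Thus everything reduces to reading off the degrees of these generators.

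Next I would use the explicit shape of the defining ideal of a telescopic semigroup ring. For a telescopic sequence $(m_1,\ldots,m_r)$ with $e_i=d_{i-1}/d_i$, the defining ideal is minimally generated by binomials $\rho_2,\ldots,\rho_r$ with $\rho_i=X_i^{e_i}-M_i$, where $M_i$ is a monomial in $X_1,\ldots,X_{i-1}$ and $\deg\rho_i=e_i m_i$. Applied to $T_1$ and $T_2$, the first $k$ generators $2n_1<\cdots<2n_k$ coincide, and the subsemigroup they generate is $2S$, which is isomorphic to $S$ by scaling; hence the relations $\rho_2,\ldots,\rho_k$ are common to both ideals, with degrees $2e_2n_2,\ldots,2e_kn_k$ (twice the relation degrees of $S$). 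For the last generator, $d_k=\gcd(2n_1,\ldots,2n_k)=2$ while $d_{k+1}=1$ (the last generator being odd), so $e_{k+1}=d_k/d_{k+1}=2$ in both cases; the corresponding relation is $X_{k+1}^2-M_{k+1}$, of degree $2(f-2g(S))$ for $T_1$ and $2(f+2-2g(S))=2(f-2g(S))+4$ for $T_2$.

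Consequently, the multiset of generator degrees of the defining ideal of $K[T_2]$ is obtained from that of $K[T_1]$ by replacing the single entry $2(f-2g(S))$ with $2(f-2g(S))+4$, all other entries being unchanged. Reading off $\beta_{1,j}$ as the multiplicity of $j$ in this multiset yields the three cases of the statement: the common entries give the equality when $j\notin\{2(f-2g(S)),2(f-2g(S))+4\}$, the deleted entry gives the drop to $0$ at $j=2(f-2g(S))$, and the inserted entry gives the increase by $1$ at $j=2(f-2g(S))+4$. The step that requires care, and that I expect to be the main obstacle, is the vanishing claim at $j=2(f-2g(S))$: it is correct precisely when $2(f-2g(S))$ is not already an internal relation degree, i.e.\ when $f-2g(S)\neq e_in_i$ for every $i\le k$. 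Since $f\ge 3g(S)+1$ forces $f-2g(S)\ge g(S)+1$, this separation holds once $f$ is large enough that $f-2g(S)$ exceeds all the finitely many, $f$-independent numbers $e_in_i$; establishing this separation (and likewise that no internal relation lies in degree $2(f-2g(S))+4$) is the only delicate point, the remainder being bookkeeping of the Koszul resolution.
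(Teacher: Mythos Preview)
Your approach is essentially the same as the paper's: both arguments observe that $T_1$ and $T_2$ are complete intersections (the paper invokes \cite[Lemma~1]{Wa}, you spell out the telescopic structure), list the generator degrees of the two defining ideals as $\{2r_1,\ldots,2r_{k-1},2(f-2g(S))\}$ and $\{2r_1,\ldots,2r_{k-1},2(f-2g(S))+4\}$, and read off the comparison. The paper does not address the separation issue you flag; it simply writes $\beta_{1,j}=1$ on the listed set without checking for coincidences, so your proof is at least as complete as the original.

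Your caution about the vanishing at $j=2(f-2g(S))$ is in fact warranted: the hypothesis $f\ge 3g(S)+1$ alone does not rule out $f-2g(S)=r_i$. For instance, with $S=\langle 3,5\rangle$ one has $g(S)=7$, the unique relation degree is $r_1=15$, and $f=29$ is odd with $29\ge 22=3g(S)+1$; then $f-2g(S)=15=r_1$, so $T_1=\langle 6,10,15\rangle$ has $\beta_{1,30}(K[T_1])=2$ while $T_2=\langle 6,10,17\rangle$ has $\beta_{1,30}(K[T_2])=1\neq 0$. Thus the ``$=0$'' clause (and the paper's assertion that $\beta_{1,j}(K[T_1])=1$ on the listed set) fails for such $f$; your proposed fix of taking $f$ large enough that $f-2g(S)$ exceeds every $e_in_i$ is the natural remedy, and should be stated as an additional hypothesis rather than left implicit.
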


\begin{proof} Since $T_1$ and $T_2$ are complete intersection, by using the proof of \cite[Lemma 1]{Wa}, one has that 
\[\beta_{1,j}(K[T_1])=
\left\{\begin {array}{ll}
			1, & \mbox{ if}\ j\in\{2r_1,\ldots,2r_{k-1},2(f-2g(S))\},\\
			0, & \mbox{ otherwise}
	\end{array}\right. , \]

and

\[\beta_{1,j}(K[T_2])=
\left\{\begin {array}{ll}
			1, & \mbox{ if}\ j\in\{2r_1,\ldots,2r_{k-1},2(f+2-2g(S))\},\\
			0, & \mbox{ otherwise}
	\end{array}\right. , \]

where $r_1,\ldots,r_{k-1}$ are the degrees of the relations in $S$.



\end{proof}
 
 For simplicity, we denote the degrees $2r_i$ of the relations in $T_1$ (and in $T_2$), as in the previous proposition, by $N_{i}$ for $i=1,\ldots,k-1$ and $[k-1]:=\{1,\ldots,k-1\}$.

\begin{cor}\label{b} 
Let $S=\langle n_1,\ldots,n_k\rangle$ be a telescopic semigroup and $f\geq 3g(S)+1$ an odd number. Let $T_1=\langle 2n_1,\ldots,2n_k,f-2g(S)\rangle$ and $T_2=\langle 2n_1,\ldots,2n_k,f+2-2g(S)\rangle$. Then 
\[\beta_{ij}(K[T_2])=
\left\{\begin {array}{ll}
			\beta_{ij}(K[T_1]), & \mbox{if}\ j\in\{N_{t_1}+\cdots+N_{t_i}:\{t_1,\ldots,t_i\}\subseteq [k-1]\}\\ 
			0, & \mbox{if}\ j\in\{2(f-2g(S))+N_{t_1}+\cdots+N_{t_{i-1}}:\\
			& \{t_1,\ldots,t_{i-1}\}\subseteq [k-1]\},\\
			\beta_{ij}(K[T_1])+1,& \mbox{if}\ j\in\{2(f-2g(S))+4+N_{t_1}+\cdots+N_{t_{i-1}}:\\
			&\{t_1,\ldots,t_{i-1}\}\subseteq [k-1]\}.
	\end{array}\right. \]
\end{cor}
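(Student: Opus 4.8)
The plan is to leverage the fact, noted in the Remark before Proposition~\ref{Bettitel}, that both $K[T_1]$ and $K[T_2]$ are complete intersections, so their minimal graded free resolutions are Koszul complexes on their defining relations. By Proposition~\ref{Bettitel} and its proof, $K[T_1]$ is resolved by the Koszul complex on a homogeneous regular sequence of relations of degrees $N_1,\dots,N_{k-1},\,a$, where $a:=2(f-2g(S))$, while $K[T_2]$ is resolved by the Koszul complex on relations of degrees $N_1,\dots,N_{k-1},\,a+4$, since $2(f+2-2g(S))=a+4$. Thus the two resolutions share the ``base'' relations of degrees $N_1,\dots,N_{k-1}$ and differ only in the degree of the last relation.

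First I would use the tensor-product factorisation of the Koszul complex. If $\phi_1,\dots,\phi_{k-1}$ are the base relations and $\psi_s$ is the last relation of $T_s$, then the resolution of $K[T_s]$ is $K(\phi_1,\dots,\phi_{k-1})\otimes K(\psi_s)$. Since the graded Betti numbers of the Koszul complex on a homogeneous regular sequence count the subsets of the degree multiset of prescribed cardinality and sum, this yields the convolution identity
\[
\beta_{ij}(K[T_s])=\beta^{\mathrm{base}}_{ij}+\beta^{\mathrm{base}}_{i-1,\,j-\deg\psi_s},
\]
where $\beta^{\mathrm{base}}_{pm}$ is the number of $p$-subsets of $\{N_1,\dots,N_{k-1}\}$ whose sum is $m$; equivalently these are the graded Betti numbers of the base complete intersection $K[\langle 2n_1,\dots,2n_k\rangle]\cong K[S]$ with degrees doubled. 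Subtracting the two instances gives
\[
\beta_{ij}(K[T_2])-\beta_{ij}(K[T_1])=\beta^{\mathrm{base}}_{i-1,\,j-a-4}-\beta^{\mathrm{base}}_{i-1,\,j-a},
\]
so the whole comparison is controlled by where the function $m\mapsto\beta^{\mathrm{base}}_{i-1,m}$, supported on the sums of $i-1$ of the $N_\ell$'s, is reached by the two shifts $j-a$ and $j-a-4$.

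From here the three cases are read off directly. If $j=N_{t_1}+\cdots+N_{t_i}$, then $\beta^{\mathrm{base}}_{ij}$ supplies the common value $\beta_{ij}(K[T_1])$, and both correction terms vanish, giving equality. If $j=a+N_{t_1}+\cdots+N_{t_{i-1}}$, the term $\beta^{\mathrm{base}}_{i-1,\,j-a}$ is the one carried by $K[T_1]$ and is cancelled in $K[T_2]$, forcing $\beta_{ij}(K[T_2])=0$. Finally, if $j=(a+4)+N_{t_1}+\cdots+N_{t_{i-1}}$, the term $\beta^{\mathrm{base}}_{i-1,\,j-a-4}$ is a genuinely new generator for $K[T_2]$, producing the surplus $+1$.

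The step I expect to be the main obstacle is not the homological algebra but the combinatorial bookkeeping needed to make these three readings clean. One must check, in each fixed homological degree $i$, that the three degree families $\{N_{t_1}+\cdots+N_{t_i}\}$, $\{a+N_{t_1}+\cdots+N_{t_{i-1}}\}$ and $\{(a+4)+N_{t_1}+\cdots+N_{t_{i-1}}\}$ are pairwise disjoint, and that each relevant value of $\beta^{\mathrm{base}}_{i-1,\,\cdot}$ is exactly $1$, so that the cancellation is total and the excess is exactly $+1$. This is precisely where the hypothesis $f\geq 3g(S)+1$ and the telescopic structure enter: the bound makes $a=2(f-2g(S))$ large, and the explicit telescopic relation degrees $r_\ell=(d_{\ell-1}/d_\ell)\,n_\ell$, together with the complete-intersection Frobenius formula $g(S)=\sum_\ell r_\ell-\sum_\ell n_\ell$, are what I would use to separate the shifted families from the base subset-sums and to confirm the uniqueness of the relevant representations.
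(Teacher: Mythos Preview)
Your approach is essentially the same as the paper's: both rest on the fact that $K[T_1]$ and $K[T_2]$ are complete intersections, so their graded Betti numbers are given by the Koszul complex on the relation degrees $N_1,\dots,N_{k-1}$ together with $a$ (resp.\ $a+4$), and $\beta_{ij}$ counts $i$-subsets of these degrees summing to $j$. The paper's proof is the single sentence ``$\beta_{ij}$ of a complete intersection is obtained as a sum of $i$ different degrees of the first syzygy''; your tensor-product/convolution formulation and subsequent case reading are just a more explicit rendering of the same idea, and the combinatorial disjointness and multiplicity issues you flag at the end are genuine subtleties that the paper's one-line argument simply does not address.
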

\begin{proof} The graded Betti number $\beta_{ij}$ of a semigroup ring associated to a numerical semigroup which is a complete intersection is obtained as a sum of $i$ different degrees of the first syzygy.
\end{proof}

\begin{rem}\label{c}
\rm We can apply the previous result to all the $3$-generated symmetric semigroups, since all of them are telescopic. Indeed any $3$-generated symmetric semigroup is of the kind $\langle dn_1,dn_2,a_1n_1+a_2n_2 \rangle$ where $\gcd(n_1,n_2)=1$, $a_1+a_2>1$ and $\gcd(d,a_1n_1+a_2n_2)=1$ (see \cite{He} and \cite{Wa}; or \cite[pages 10--11]{F}).
\end{rem}

\begin{exa}\rm We will determine the Betti numbers for the semigroups $T_1=\langle 12,20,22,205\rangle$ and
$T_2=\langle 12,20,22,207\rangle$. They are both equal to $\frac{S}{2}$, constructed from
$S=\langle 6,10,11\rangle$ with $f=255$ and $f=257$, respectively. We start with $U=\langle 3,5\rangle$. Then
$K[U]=K[X,Y]/(X^5-Y^3)$ with $\deg(X)=3$ and $\deg(Y)=5$, so $\deg(X^5-Y^3)=15$. Thus $\beta_{0,0}(K[U])=\beta_{1,15}(K[U])=1$
are the only nonzero Betti numbers. Then $S=\langle 2\cdot 3,2\cdot 5,11\rangle=\langle 6,10,11\rangle$ is also a complete intersection, and
$\beta_{0,0}(K[S])=\beta_{1,j}(K[S])=\beta_{2,k}=1$ for $j=2\cdot 11=22$, $j=2\cdot 15=30$, and $k=22+30=52$ are the only nonzero Betti numbers.

Finally
\[	\begin {array}{ll}
	\beta_{0,0}(T_1)=1& \\
	\beta_{1,j}(K[T_1])=1, &\mbox{for}\ j\in\{44=2\cdot 22,60=2\cdot 30,410=2\cdot 205\},\\
	\beta_{2,j}(K[T_1])=1, &\mbox{for}\ j\in\{104=44+60,454=44+410,470=60+410\},\\ 
	\beta_{3,j}(K[T_1])=1, &\mbox{for}\ j=514=44+60+410,
\end{array}\]
and $\beta_{i,j}(K[T_1])=0$ otherwise, and 

\[	\begin {array}{ll}
			\beta_{0,0}(K[T_2])=1 &\\
			\beta_{1,j}(K[T_2])=1, &\mbox{for}\ j\in\{44=2\cdot 22,60=2\cdot 30,414=2\cdot 207\},\\
			\beta_{2,j}(K[T_2])=1, &\mbox{for}\ j\in\{104=44+60,458=44+414,474=60+414\},\\ 
			\beta_{3,j}(K[T_2])=1, &\mbox{for}\ j=518=44+60+414,
			\end{array}\]
and $\beta_{i,j}(K[T_2])=0$ otherwise.
\end{exa}
\section{Semigroups with 3 generators which are not symmetric}

Let $S=\langle n_1,n_2,n_3\rangle$ be a numerical semigroup which is not symmetric, $n_1<n_2<n_3$. Since the type of a $3$-generated numerical semigroup is less than or equal to $2$ (see, \cite[Corollary 10.22]{RGS}) and the type is equal to one if and only if $S$ is symmetric, then the set $T(S)$ of pseudo-Frobenius numbers of $S$ is equal to $\{g_1,g_2\}$ where $g_1=g(S)$ is the Frobenius number of $S$. Let $c_i$ be the minimal positive integer such that $c_in_i\in\langle n_j,n_k\rangle$, $i\neq j,\neq k$ and consider $c_in_i=r_{i,j}n_j+r_{i,k}n_k$.

\begin{thm}(\cite{He}) Let $S=\langle n_1,n_2,n_3\rangle$ be a numerical semigroup which is not symmetric. Then, in the above notation,
\[ \beta_{1,j}(S)=
\left\{\begin {array}{ll}
			1, & \mbox{ if}\ j\in\{n_1c_1,n_2c_2,n_3c_3\},\\
			0, & \mbox{ otherwise},
	\end{array}\right. \] 
	\[ \beta_{2,j}(S)=
\left\{\begin {array}{ll}
			1, & \mbox{ if}\ j\in\{n_2c_2+n_3r_{1,3},n_3c_3+n_2r_{1,2}\},\\
			0, & \mbox{ otherwise}
	\end{array}\right. \] 
and $\beta_i(S)=0$, for $i\geq3$.
\end{thm}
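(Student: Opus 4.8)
The plan is to prove Herzog's theorem by computing the graded Betti numbers through the simplicial complexes $\Delta_s$ of Proposition~\ref{complex}, exploiting the well-understood structure of the defining ideal $I$ of a non-symmetric $3$-generated semigroup. The starting point is Herzog's classical description of $I$: for a non-symmetric $S=\langle n_1,n_2,n_3\rangle$ the ideal is generated by the three binomials $\rho_i=X_i^{c_i}-X_j^{r_{i,j}}X_k^{r_{i,k}}$ (with $\{i,j,k\}=\{1,2,3\}$), and these three relations satisfy two syzygies; moreover $I$ is \emph{not} a complete intersection precisely because $S$ is not symmetric. So the expected Betti table has $\beta_0=1$, $\beta_1=3$, $\beta_2=2$, and $\beta_i=0$ for $i\ge 3$, which already matches the shape of the statement.

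First I would recall (or re-derive) the minimal generators of $I$ and their degrees: $\deg\rho_i = c_i n_i$, which immediately gives the three first syzygy degrees $\{n_1c_1,n_2c_2,n_3c_3\}$ claimed for $\beta_{1,j}$. To see that these degrees are distinct and that each contributes exactly $1$ (so $\beta_{1,j}\in\{0,1\}$), I would use Proposition~\ref{complex} and check that in each degree $s=c_in_i$ the complex $\Delta_s$ has first reduced homology of dimension one; concretely, $\Delta_{c_in_i}$ consists of the two disjoint faces corresponding to the two monomials $X_i^{c_i}$ and $X_j^{r_{i,j}}X_k^{r_{i,k}}$ representing $s$, so $\tilde H_0(\Delta_s)$ is one-dimensional, forcing $\beta_{1,s}=1$. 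Second, for the second syzygies I would identify the two relations among the $\rho_i$. The standard fact is that, writing the relations in a Hilbert–Burch-type form, the two second syzygies have degrees $n_2c_2+n_3r_{1,3}$ and $n_3c_3+n_2r_{1,2}$; I would verify these are exactly the degrees where $\Delta_s$ becomes disconnected in homological degree one (i.e.\ $\tilde H_1(\Delta_s)\neq 0$), again invoking Proposition~\ref{complex} to read off $\beta_{2,s}=\dim_K\tilde H_1(\Delta_s,K)=1$.

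The main obstacle is the bookkeeping in the second step: pinning down precisely which two degrees carry the second syzygies and proving that $\tilde H_1(\Delta_s,K)$ is one-dimensional there and vanishes elsewhere. This requires understanding how the two columns of the Hilbert–Burch matrix combine the generators $\rho_i$, and translating the resulting degree identities $n_2c_2+n_3r_{1,3}=n_3c_3+n_2r_{1,2}+\text{(something)}$ into the language of $\Delta_s$. Finally, the vanishing $\beta_i(S)=0$ for $i\ge 3$ follows from projective dimension considerations: since $K[S]$ is Cohen–Macaulay of dimension one over $A=K[X_1,X_2,X_3]$, the Auslander–Buchsbaum formula gives $\operatorname{pd}_A K[S]=3-1=2$, so the resolution stops at homological degree $2$ and all higher Betti numbers vanish. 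I would close by remarking that the whole computation can alternatively be packaged as the minimal free resolution $0\to A^2\to A^3\to A\to K[S]\to 0$, whose maps are the Herzog matrix and its Hilbert–Burch cofactors, from which every displayed degree is read directly.
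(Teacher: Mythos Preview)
The paper does not give its own proof of this theorem: it is stated with the citation \cite{He} and left unproved, serving only as background for Section~4. So there is no argument in the paper to compare your proposal against.

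As for the soundness of your outline: the closing remark---writing down the Hilbert--Burch resolution $0\to A^2\to A^3\to A\to K[S]\to 0$ explicitly and reading off the degrees---is essentially Herzog's original proof and is the efficient route. Your primary plan via the complexes $\Delta_s$ is also legitimate, but the sketch has real gaps. For $\beta_{1}$ you describe $\Delta_{c_in_i}$ as ``two disjoint faces,'' yet you have not argued that the vertex $\{n_i\}$ is genuinely disconnected from the edge $\{n_j,n_k\}$; this amounts to showing $(c_i-1)n_i-n_j\notin S$ and $(c_i-1)n_i-n_k\notin S$, which needs the minimality of $c_i$ together with the non-symmetry hypothesis (in the symmetric case one of the $r_{i,j}$ vanishes and the picture changes). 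You also do not address why $\Delta_s$ is connected for every $s\notin\{c_1n_1,c_2n_2,c_3n_3\}$, nor why the three degrees $c_in_i$ are pairwise distinct---again a feature of the non-symmetric case. For $\beta_2$ you correctly flag the bookkeeping as the main obstacle but do not carry it out. The Auslander--Buchsbaum argument for $\beta_i=0$, $i\ge 3$, is correct.
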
 

Let now $\varphi:K[X,Y,Z]\longrightarrow K[S]$ be the $K$-algebra homomorphism defined by $\varphi(X)=t^{n_1}$, $\varphi(Y)=t^{n_2}$ and $\varphi(Z)=t^{n_3}$ and let $I_S=\ker\varphi$. 

\begin{prop}\cite{He}\label{1aa}
The ideal $I_S$ is generated by the maximal minors of the matrix 
$$
M=\begin{pmatrix}X^a & Y^b & Z^c\\ Z^d & X^e & Y^f
\end{pmatrix}
$$

\noindent for some $a,b,c,d,e,f\in\mathbb N_{>0}$. 
\end{prop}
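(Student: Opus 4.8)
The plan is to identify the three minimal binomial generators of $I_S$ explicitly and then recognize them as the $2\times 2$ minors of a single $2\times 3$ monomial matrix. A $3$-generated numerical semigroup is a complete intersection precisely when it is symmetric, so the hypothesis that $S$ is not symmetric forces $K[S]$ to be a non-complete-intersection ring; in particular $I_S$ cannot be generated by two elements, and indeed $\mu(I_S)=\beta_1=3$ by the theorem of Herzog recalled above. Writing the defining relations $c_in_i=r_{i,j}n_j+r_{i,k}n_k$ for $i=1,2,3$, one checks (again using that $S$ is not a complete intersection) that every $r_{i,j}$ is strictly positive and that the three critical binomials $f_1=X^{c_1}-Y^{r_{1,2}}Z^{r_{1,3}}$, $f_2=Y^{c_2}-X^{r_{2,1}}Z^{r_{2,3}}$, $f_3=Z^{c_3}-X^{r_{3,1}}Y^{r_{3,2}}$ form a minimal generating set of $I_S$. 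The remaining task is purely combinatorial: to fit these three binomials into the matrix $M$.

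The heart of the proof is the set of exponent identities $c_1=r_{2,1}+r_{3,1}$, $c_2=r_{1,2}+r_{3,2}$, $c_3=r_{1,3}+r_{2,3}$. I would extract these from a syzygy among $f_1,f_2,f_3$ whose existence is guaranteed by the graded Betti numbers already recorded: there is a minimal syzygy of $I_S$ in $S$-degree $c_2n_2+r_{1,3}n_3$, since $\beta_{2,\,c_2n_2+r_{1,3}n_3}=1$. The three components of such a syzygy lie in degrees $(c_2-r_{1,2})n_2$, $r_{1,3}n_3$ and $c_2n_2+(r_{1,3}-c_3)n_3$; once one knows these components are monomials, the relation reads $Y^{c_2-r_{1,2}}f_1+Z^{r_{1,3}}f_2+m\,f_3=0$ for a monomial $m$ (note $c_2-r_{1,2}>0$ by minimality of $c_2$). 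A direct expansion gives
\[
Y^{c_2-r_{1,2}}f_1+Z^{r_{1,3}}f_2=X^{c_1}Y^{c_2-r_{1,2}}-X^{r_{2,1}}Z^{r_{1,3}+r_{2,3}},
\]
so this binomial must equal $-m f_3=mX^{r_{3,1}}Y^{r_{3,2}}-mZ^{c_3}$. Comparing the two positive and the two negative monomials, and using that $X^{c_1}Y^{c_2-r_{1,2}}$ involves only $X,Y$ while $X^{r_{2,1}}Z^{r_{1,3}+r_{2,3}}$ involves only $X,Z$, forces $m=X^{r_{2,1}}$ together with $c_1=r_{2,1}+r_{3,1}$, $c_2=r_{1,2}+r_{3,2}$ and $r_{1,3}+r_{2,3}=c_3$, which are exactly the three identities. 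The \emph{delicate point}, which I expect to be the main obstacle, is justifying that the minimal syzygy in this degree really has the displayed monomial coefficients rather than a more general homogeneous combination; this is where the toric (binomial) structure of $I_S$ and the fact that each nonzero $\beta_{i,j}$ equals $1$, in mutually distinct degrees, must be exploited.

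With the identities in hand, the assembly is routine. Setting $a=r_{3,1}$, $b=r_{1,2}$, $c=r_{2,3}$, $d=r_{1,3}$, $e=r_{2,1}$, $f=r_{3,2}$ — all positive, as required — the matrix
\[
M=\begin{pmatrix}X^{r_{3,1}} & Y^{r_{1,2}} & Z^{r_{2,3}}\\ Z^{r_{1,3}} & X^{r_{2,1}} & Y^{r_{3,2}}\end{pmatrix}
\]
has as its three maximal minors exactly $f_1$, $-f_3$ and $f_2$; for instance the minor on columns $1,2$ is $X^{r_{3,1}+r_{2,1}}-Y^{r_{1,2}}Z^{r_{1,3}}=X^{c_1}-Y^{r_{1,2}}Z^{r_{1,3}}=f_1$, using $c_1=r_{3,1}+r_{2,1}$, and the other two are checked the same way. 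Hence the ideal of maximal minors of $M$ equals $(f_1,f_2,f_3)=I_S$, which is the assertion. As a structural cross-check one may invoke the Hilbert–Burch theorem: $K[S]$ is a one-dimensional Cohen–Macaulay ring, so $I_S$ is a height-$2$ perfect ideal admitting a length-$2$ minimal free resolution $0\to A^2\to A^3\to A$ (consistent with the Betti numbers above), and therefore $I_S$ must be the ideal of maximal minors of the $2\times 3$ matrix of first syzygies — which the preceding computation identifies with $M$.
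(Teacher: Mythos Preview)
The paper does not prove this proposition at all: it is stated with a bare citation to Herzog \cite{He}. What you have written is in fact a proof of the sharper Proposition~\ref{1a}, which identifies the exponents $a,\dots,f$ explicitly as the $r_{i,j}$, and for that statement the paper does give an argument --- but a different one. The paper starts from the abstract matrix $M$ supplied by Herzog, reads off $K[S]/(t^{n_1})\cong K[Y,Z]/(Y^bZ^d,Z^{c+d},Y^{b+f})$, uses $n_1=\dim_K K[S]/(t^{n_1})$ (and likewise for $n_2,n_3$) to obtain the product formulas for $n_1,n_2,n_3$, and then deduces $a+e=c_1$, $b+f=c_2$, $c+d=c_3$ via \cite{NNW}. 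Your route, by contrast, does not assume the matrix form a priori: you take the three critical binomials from Herzog's generating set and extract the identities $c_i=r_{j,i}+r_{k,i}$ from a single syzygy computation, after which assembling $M$ is immediate. Your approach is more self-contained (it does not need the auxiliary length computation or \cite{NNW}); the paper's approach is shorter because it leans on the cited structural result.

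The only soft spot in your argument is the ``delicate point'' you flag yourself: you appeal to the existence of a minimal syzygy in degree $c_2n_2+r_{1,3}n_3$ and then have to argue that its coefficients are monomials. This detour through the syzygy module is unnecessary. You already compute directly that
\[
Y^{c_2-r_{1,2}}f_1+Z^{r_{1,3}}f_2=X^{c_1}Y^{c_2-r_{1,2}}-X^{r_{2,1}}Z^{r_{1,3}+r_{2,3}}\in I_S.
\]
Since $I_S$ is a prime ideal not containing $X$, you may cancel the common $X$-power. If $c_1\le r_{2,1}$ the resulting binomial would witness $(c_2-r_{1,2})n_2\in\langle n_1,n_3\rangle$ with $0<c_2-r_{1,2}<c_2$, contradicting the minimality of $c_2$; hence $r_{2,1}<c_1$, and the reduced binomial $X^{c_1-r_{2,1}}Y^{c_2-r_{1,2}}-Z^{r_{1,3}+r_{2,3}}\in I_S$ shows $(r_{1,3}+r_{2,3})n_3\in\langle n_1,n_2\rangle$. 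Minimality of $c_3$ gives $r_{1,3}+r_{2,3}\ge c_3$, and since in the non-symmetric case the representation $c_3n_3=r_{3,1}n_1+r_{3,2}n_2$ is unique with all $r_{3,j}>0$, equality and the remaining two identities follow at once by comparing exponents. This removes any need to analyse the syzygy module and closes the gap cleanly.
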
 

We can relate the exponents of the variables in the matrix $M$ with the $r_{i,j}$.

\begin{prop}\label{1a}
The ideal $I_S$ is generated by the maximal minors of the matrix
 
$$
\begin{pmatrix}X^{r_{3,1}} & Y^{r_{1,2}} & Z^{r_{2,3}}\\ Z^{r_{1,3}} & X^{r_{2,1}} & Y^{r_{3,2}}
\end{pmatrix}.
$$
\end{prop}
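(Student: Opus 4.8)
The plan is to take the matrix $M$ furnished by Proposition~\ref{1aa} and show that its entries are forced to be the $r_{i,j}$. First I would expand the three maximal $2\times 2$ minors of $M$, obtaining the binomials
$$m_1=Y^{b+f}-X^{e}Z^{c},\qquad m_2=X^{a}Y^{f}-Z^{c+d},\qquad m_3=X^{a+e}-Y^{b}Z^{d},$$
all lying in $I_S=\ker\varphi$. Only $m_3$ carries a pure power of $X$, only $m_1$ a pure power of $Y$, and only $m_2$ a pure power of $Z$; since $c_i$ is by definition the least integer with $c_in_i\in\langle n_j,n_k\rangle$, these pure powers must be exactly $c_1,c_2,c_3$. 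Hence $a+e=c_1$, $b+f=c_2$, $c+d=c_3$, the degrees of $m_3,m_1,m_2$ are $c_1n_1,c_2n_2,c_3n_3$, and the ``mixed'' sides give the three representations $c_1n_1=bn_2+dn_3$, $c_2n_2=en_1+cn_3$ and $c_3n_3=an_1+fn_2$, each with positive exponents by Proposition~\ref{1aa}.

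Next I would match these against the defining representations $c_1n_1=r_{1,2}n_2+r_{1,3}n_3$, $c_2n_2=r_{2,1}n_1+r_{2,3}n_3$ and $c_3n_3=r_{3,1}n_1+r_{3,2}n_2$. Granting that in the non-symmetric three-generated case each $c_in_i$ has a \emph{unique} expression as a positive combination of the other two generators, the equalities above force $b=r_{1,2}$, $d=r_{1,3}$, $e=r_{2,1}$, $c=r_{2,3}$, $a=r_{3,1}$ and $f=r_{3,2}$. Substituting these back into $M$ produces precisely the matrix in the statement. As a byproduct, the identities $a+e=c_1$, $b+f=c_2$, $c+d=c_3$ become the gluing relations $c_1=r_{2,1}+r_{3,1}$, $c_2=r_{1,2}+r_{3,2}$, $c_3=r_{1,3}+r_{2,3}$.

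The exponents $b$ and $d$ can also be pinned down independently of uniqueness, by a degree count: the Hilbert--Burch resolution attached to $M$ has its two second syzygies given by the rows $(X^a,Y^b,Z^c)$ and $(Z^d,X^e,Y^f)$ of $M$, of degrees $c_3n_3+bn_2$ and $c_2n_2+dn_3$; comparing with the values $n_3c_3+n_2r_{1,2}$ and $n_2c_2+n_3r_{1,3}$ at which $\beta_2$ is nonzero in the theorem of \cite{He}, one reads off $b=r_{1,2}$ and $d=r_{1,3}$. I expect the main obstacle to be the structural input used above, namely the uniqueness of the representation of each $c_in_i$ (equivalently, the gluing relations), since this is the arithmetic heart of the non-symmetric case and does not follow formally from Proposition~\ref{1aa} and degree bookkeeping alone. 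I would either import it from the analysis of the defining relations in \cite{He}, or establish it directly by checking that the monomials $Y^{b}Z^{d}$, $X^{e}Z^{c}$, $X^{a}Y^{f}$ occurring in the minimal generators $m_3,m_1,m_2$ are reduced, i.e.\ no variable occurs to a power large enough to be traded via a pure two-variable relation of $S$; this makes the three representations the unique reduced ones and identifies them with those defining the $r_{i,j}$, completing the proof.
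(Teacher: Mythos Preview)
Your argument is correct and reaches the same conclusion, but the route differs from the paper's. The paper does not read the relations off the minors directly; instead it passes to the Artinian quotient $K[S]/(t^{n_1})\cong K[Y,Z]/(Y^bZ^d,Z^{c+d},Y^{b+f})$ and computes $n_1=\dim_K K[S]/(t^{n_1})=fd+bd+bc$, and symmetrically $n_2=ed+ec+ac$, $n_3=ef+af+ab$. A short algebraic manipulation of these three identities then yields $(a+e)n_1=bn_2+dn_3$, $(b+f)n_2=en_1+cn_3$, $(c+d)n_3=an_1+fn_2$, and the equalities $a+e=c_1$, $b+f=c_2$, $c+d=c_3$ are imported from \cite{NNW}. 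From there the identification of the exponents with the $r_{i,j}$ is made.

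Your approach is more elementary: you get the same three relations by simply expanding the minors and using minimality of the $c_i$, without the detour through the Artinian length count or the external reference. On the other hand, both arguments hinge on the same final step, namely that the expression $c_in_i=r_{i,j}n_j+r_{i,k}n_k$ is unique in the non-symmetric case. The paper leaves this implicit; you correctly isolate it as the only nontrivial ingredient and indicate where to find it (Herzog's analysis in \cite{He}). Your alternative via the $\beta_2$-degrees is legitimate but somewhat circular in this context, since Herzog's theorem is what one is effectively re-deriving; the cleanest justification is the one you name last, or simply the direct citation of \cite{He}.
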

 
\begin{proof} Since $K[S]/(t^{n_1})\cong K[Y,Z]/(Y^bZ^d,Z^{c+d},Y^{b+f})$ and $n_1=\dim K[S]/(t^{n_1})$, and likewise for $n_2,n_3$, we get the equations

$$
n_1=fd+bd+bc,
$$
$$
n_2=ed+ec+ac,
$$
$$
n_3=ef+af+ab.
$$

Hence we have 

$$
(a+e)n_1=bn_2+dn_3,
$$
$$
\ \ (f+b)n_2=en_1+cn_3,\ \ \ \ (1)
$$
$$
(c+d)n_3=an_1+fn_3
$$

\noindent with $a+e=c_1$, $f+b=c_2$ and $c+d=c_3$ (see, \cite[prop 2.1]{NNW}). Finally we get 

$$
M=\begin{pmatrix}X^{r_{3,1}} & Y^{r_{1,2}} & Z^{r_{2,3}}\\ Z^{r_{1,3}} & X^{r_{2,1}} & Y^{r_{3,2}}
\end{pmatrix}.
$$
\end{proof}

We recall that $T(S)=\{g_1,g_2\}$ is the set of pseudo-Frobenius numbers of $S$, where $g_1=g(S)$ is the Frobenius number of $S$. 
Our next aim is to express the difference $g_1-g_2$ in terms of the $r_{i,j}$ using the fact that, for numerical semigroups with embedding dimension three, the set of pseudo-Frobenius numbers is known:

\begin{prop}\label{T(S)}\cite[Corollary 12]{RG} Let $S=\langle n_1,n_2,n_3\rangle$ be a numerical semigroup. Then $$T(S)=\{(c_3-1)n_3+(r_{1,2}-1)n_2-n_1,\ (c_2-1)n_2+(r_{1,3}-1)n_3-n_1\}.$$
\end{prop}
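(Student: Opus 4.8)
The plan is to read off the pseudo-Frobenius numbers directly from the top of the minimal free resolution of $K[S]$. Since $S$ is not symmetric and $3$-generated, its type is $2$ (as recalled at the start of this section), so $|T(S)|=2$, and $K[S]$ is a one-dimensional Cohen--Macaulay graded quotient of $A=K[X,Y,Z]$ whose resolution has the shape $0\to A^2\to A^3\to A\to K[S]\to 0$. By the theorem of \cite{He} recalled above, the last free module is $F_2=A(-s_1)\oplus A(-s_2)$, where $s_1=n_2c_2+n_3r_{1,3}$ and $s_2=n_3c_3+n_2r_{1,2}$ are exactly the two degrees in which $\beta_{2,j}(S)\neq 0$.

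Next I would pass to the canonical module. Writing $\sigma=n_1+n_2+n_3$, so that the graded canonical module of $A$ is $\omega_A=A(-\sigma)$, one has $\omega_{K[S]}\cong\mathrm{Ext}^2_A(K[S],\omega_A)$, and dualizing the resolution shows that $\omega_{K[S]}$ is minimally generated in the two degrees $\sigma-s_1$ and $\sigma-s_2$. The standard correspondence between the graded structure of the canonical module of a numerical semigroup ring and its set of pseudo-Frobenius numbers---namely that the shifts appearing in the last free module of the minimal $A$-free resolution yield $T(S)$ after subtracting $\sigma$---then gives
\[
T(S)=\{\,s_1-\sigma,\ s_2-\sigma\,\}.
\]
It remains only to substitute and simplify. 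Using $\sigma=n_1+n_2+n_3$ one computes
\[
s_1-\sigma=n_2c_2+n_3r_{1,3}-n_1-n_2-n_3=(c_2-1)n_2+(r_{1,3}-1)n_3-n_1,
\]
\[
s_2-\sigma=n_3c_3+n_2r_{1,2}-n_1-n_2-n_3=(c_3-1)n_3+(r_{1,2}-1)n_2-n_1,
\]
which is precisely the asserted description of $T(S)$.

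The main obstacle is the duality bookkeeping of the middle paragraph: one must pin down, with the correct sign and grading conventions, that shifting the degrees of the top free module $F_2$ by $-\sigma$ produces the pseudo-Frobenius numbers themselves rather than their negatives or some other reflection. I would secure this either through the explicit canonical-module computation above or, as an independent cross-check that avoids local duality altogether, by verifying directly from the relations $(1)$ of Proposition~\ref{1a} that each of the two displayed integers lies in $\mathbb{Z}\setminus S$ while its sum with each of $n_1,n_2,n_3$ lies in $S$; since the type is already known to equal $2$, exhibiting two such distinct integers forces them to be exactly the elements of $T(S)$.
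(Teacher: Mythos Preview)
The paper supplies no proof of this proposition; it is quoted verbatim from \cite[Corollary~12]{RG}. Your argument is therefore an independent proof, and it is correct. It takes a homological route different from the elementary combinatorial argument in \cite{RG}: there one verifies directly, using the identities among the $r_{i,j}$ (essentially the relations~$(1)$ in the proof of Proposition~\ref{1a}), that each of the two displayed integers lies outside $S$ while its sum with every $n_i$ lies in $S$---which is exactly the cross-check you propose in your final paragraph. Your canonical-module approach is also sound once the sign is pinned down: dualizing the Herzog resolution shows that $\omega_{K[S]}$ is minimally generated in degrees $\sigma-s_1$ and $\sigma-s_2$, and since for a numerical semigroup ring one has $(\omega_{K[S]})_d\neq 0$ iff $-d\notin S$, the minimal generating degrees of $\omega_{K[S]}$ are precisely $\{-g:g\in T(S)\}$; equating gives $T(S)=\{s_1-\sigma,\,s_2-\sigma\}$ as you claim. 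The homological argument has the advantage of explaining \emph{why} the two numbers in Herzog's $\beta_{2,j}$ list are the right ones, whereas the direct verification is shorter and self-contained.
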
 

The generators of the numerical semigroup $S$ can be expressed in terms of the numbers $r_{i,j}$:

\begin{lemma}\cite[Lemma 5]{RG}\label{4}
$$
n_1=r_{1,2}r_{1,3}+r_{1,2}r_{2,3}+r_{1,3}r_{3,2},
$$
$$
n_2=r_{1,3}r_{2,1}+r_{2,1}r_{2,3}+r_{2,3}r_{3,1},
$$
$$
n_3=r_{1,2}r_{3,1}+r_{2,1}r_{3,2}+r_{3,1}r_{3,2}.
$$
\end{lemma}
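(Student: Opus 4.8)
The plan is to recognize that the three displayed identities are exactly the expressions for $n_1$, $n_2$, $n_3$ that already appear in the proof of Proposition~\ref{1a}, transcribed through the dictionary relating the two matrices of Propositions~\ref{1aa} and~\ref{1a}. Matching
$$
\begin{pmatrix}X^a & Y^b & Z^c\\ Z^d & X^e & Y^f\end{pmatrix}
=\begin{pmatrix}X^{r_{3,1}} & Y^{r_{1,2}} & Z^{r_{2,3}}\\ Z^{r_{1,3}} & X^{r_{2,1}} & Y^{r_{3,2}}\end{pmatrix}
$$
gives $a=r_{3,1}$, $b=r_{1,2}$, $c=r_{2,3}$, $d=r_{1,3}$, $e=r_{2,1}$, $f=r_{3,2}$, and the proof of Proposition~\ref{1a} records the formulas $n_1=fd+bd+bc$, $n_2=ed+ec+ac$, $n_3=ef+af+ab$. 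So once these three formulas are justified, the lemma is a pure substitution.

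The substantive step is therefore the colength computation producing $n_1=fd+bd+bc$ (and its two analogues). First I would use that $t^{n_1}$ is a nonzerodivisor on the domain $K[S]$, so that $K[S]/(t^{n_1})$ has $K$-basis $\{t^s: s\in S\setminus(n_1+S)\}$; since $S\setminus(n_1+S)$ meets each residue class modulo $n_1$ exactly once, $\dim_K K[S]/(t^{n_1})=n_1$, as asserted in the proof of Proposition~\ref{1a}. Next, reducing the three maximal minors of $M$ modulo $X$ (all exponents are positive by Proposition~\ref{1aa}, so the $X$-terms vanish while the pure $Y,Z$ monomials survive) yields the isomorphism $K[S]/(t^{n_1})\cong K[Y,Z]/(Y^bZ^d,\,Z^{c+d},\,Y^{b+f})$. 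Finally I would count the standard monomials $Y^iZ^j$ outside this monomial ideal by inclusion--exclusion: they fill the rectangle $0\le i<b+f$, $0\le j<c+d$ with the subrectangle $b\le i<b+f$, $d\le j<c+d$ removed, giving $(b+f)(c+d)-fc=bc+bd+fd$. The two remaining formulas follow identically after reducing modulo $Y$ (ideal $(X^{a+e},Z^{c+d},X^eZ^c)$, count $(a+e)(c+d)-ad=ed+ec+ac$) and modulo $Z$ (ideal $(X^{a+e},X^aY^f,Y^{b+f})$, count $(a+e)(b+f)-eb=ef+af+ab$).

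Substituting the dictionary into $n_1=fd+bd+bc$ gives $n_1=r_{3,2}r_{1,3}+r_{1,2}r_{1,3}+r_{1,2}r_{2,3}$, which is the stated $n_1=r_{1,2}r_{1,3}+r_{1,2}r_{2,3}+r_{1,3}r_{3,2}$; the other two come out the same way. I expect no real obstacle beyond bookkeeping: the one point requiring care is that each minor retains exactly one monomial after the relevant variable is set to zero, so that no generator of the monomial ideal is lost and none is spuriously added, which is precisely where the strict positivity of the exponents $a,\ldots,f$ from Proposition~\ref{1aa} is used; the inclusion--exclusion counts must then be carried out symmetrically in the three cases.
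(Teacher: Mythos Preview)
Your argument is correct. The paper does not actually prove Lemma~\ref{4}; it is simply quoted from \cite[Lemma~5]{RG}. But as you observe, the identical formulas (in the $a,\ldots,f$ notation) already appear without justification inside the paper's proof of Proposition~\ref{1a}, so your contribution is to supply the colength computation that the paper takes for granted there and then apply the dictionary $a=r_{3,1}$, $b=r_{1,2}$, $c=r_{2,3}$, $d=r_{1,3}$, $e=r_{2,1}$, $f=r_{3,2}$. The monomial counts $(b+f)(c+d)-fc$, $(a+e)(c+d)-ad$, $(a+e)(b+f)-eb$ are all correct, and the point you flag about strict positivity of the exponents ensuring that each minor reduces to a single nonzero monomial modulo the relevant variable is exactly the right hypothesis to invoke.
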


Using the above results, we get:

\begin{cor}\label{3} In the above notation
\begin{itemize}
	\item[(i)]$g_1-g_2=r_{1,3}r_{2,1}r_{3,2}-r_{1,2}r_{2,3}r_{3,1}$, if $g_1=(c_2-1)n_2+(r_{13}-1)n_3-n_1$.
	\item[(ii)]$g_1-g_2=r_{1,2}r_{2,3}r_{3,1}-r_{1,3}r_{2,1}r_{3,2}$, if $g_1=(c_3-1)n_3+(r_{12}-1)n_2-n_1$.
\end{itemize}
\end{cor}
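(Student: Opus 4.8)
The plan is to reduce both parts to a single signed computation. Observe that Proposition~\ref{T(S)} presents the two elements of $T(S)$ symmetrically, and the only difference between parts (i) and (ii) is which of these two elements is declared to be the Frobenius number $g_1$ (hence the largest). So I would first compute the difference of the two listed pseudo-Frobenius numbers in a fixed order, namely
\[
D := \bigl[(c_3-1)n_3+(r_{1,2}-1)n_2-n_1\bigr]-\bigl[(c_2-1)n_2+(r_{1,3}-1)n_3-n_1\bigr],
\]
show that $D=r_{1,2}r_{2,3}r_{3,1}-r_{1,3}r_{2,1}r_{3,2}$, and then simply read off the sign: in case (ii) we have $g_1-g_2=D$, while in case (i) the roles are swapped and $g_1-g_2=-D$, giving exactly the two stated formulas.

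To evaluate $D$, I would first note that the $-n_1$ terms cancel, leaving
\[
D=(c_3-r_{1,3})\,n_3+(r_{1,2}-c_2)\,n_2.
\]
Here I would invoke the relations established in the proof of Proposition~\ref{1a}: comparing the matrix $M$ there with the general form in Proposition~\ref{1aa} gives $b=r_{1,2}$, $c=r_{2,3}$, $d=r_{1,3}$, $f=r_{3,2}$, and the identities $c_2=b+f=r_{1,2}+r_{3,2}$ and $c_3=c+d=r_{2,3}+r_{1,3}$ from $(1)$. Substituting these yields $c_3-r_{1,3}=r_{2,3}$ and $r_{1,2}-c_2=-r_{3,2}$, so that $D$ collapses to the clean expression $D=r_{2,3}\,n_3-r_{3,2}\,n_2$.

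The final step is to substitute the formulas for $n_2$ and $n_3$ from Lemma~\ref{4} and expand. Both $r_{2,3}n_3$ and $r_{3,2}n_2$ are sums of three products; upon subtraction, the two cross terms $r_{2,1}r_{2,3}r_{3,2}$ and $r_{2,3}r_{3,1}r_{3,2}$ appear in both expansions and cancel, leaving precisely $r_{1,2}r_{2,3}r_{3,1}-r_{1,3}r_{2,1}r_{3,2}$, as claimed for $D$.

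I do not expect any genuine obstacle here, since the argument is entirely a chain of substitutions from results already proved (Proposition~\ref{T(S)}, the relations $(1)$, and Lemma~\ref{4}). The only point requiring care is bookkeeping: keeping the six indices $r_{i,j}$ straight through the expansion so that exactly the right two products cancel, and tracking the overall sign so that the correct case (i) or (ii) is attached to each choice of $g_1$. That sign discipline is the one place where a slip would invert the answer, so I would double-check it by verifying that the stated $g_1$ in each case is indeed the larger pseudo-Frobenius number under the sign of $D$.
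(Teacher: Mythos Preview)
Your proposal is correct and is exactly the computation the paper has in mind: the paper's own proof is the single sentence ``follows immediately by Proposition~\ref{T(S)} and Lemma~\ref{4},'' and your argument simply unpacks that, using the identities $c_2=r_{1,2}+r_{3,2}$ and $c_3=r_{2,3}+r_{1,3}$ from the proof of Proposition~\ref{1a} along the way. There is no substantive difference in approach.
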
 

\begin{proof} The proof follows immediately by Proposition~\ref{T(S)} and Lemma~\ref{4}.  
\end{proof}

\begin{cor}\label{5}
Let $r_{1,2},r_{1,3},r_{2,1},r_{2,3},r_{3,1},r_{3,2},n_1,n_2,n_3$ be as above. 
\begin{itemize}
	\item[(i)] If $g_1=(c_2-1)n_2+(r_{13}-1)n_3-n_1$, then $r_{1,3}n_3-r_{3,1}n_1=r_{2,1}n_1-r_{1,2}n_2$ $=r_{3,2}n_2-r_{2,3}n_3=g_1-g_2.$
\item[(ii)] If $g_1=(c_3-1)n_3+(r_{12}-1)n_2-n_1$, then $r_{3,1}n_1-r_{1,3}n_3=r_{1,2}n_2-r_{2,1}n_1$ $=r_{2,3}n_3-r_{3,2}n_2=g_1-g_2.$
\end{itemize}
\end{cor}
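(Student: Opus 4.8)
The plan is to reduce both parts to the three defining relations of $S$. Everything follows from the relations (1) established in the proof of Proposition~\ref{1a}, namely $(r_{2,1}+r_{3,1})n_1 = r_{1,2}n_2 + r_{1,3}n_3$, $(r_{1,2}+r_{3,2})n_2 = r_{2,1}n_1 + r_{2,3}n_3$, and $(r_{1,3}+r_{2,3})n_3 = r_{3,1}n_1 + r_{3,2}n_2$, together with the identities $c_2 = r_{1,2}+r_{3,2}$ and $c_3 = r_{1,3}+r_{2,3}$ recorded there. I would first show that the three displayed differences in (i) are mutually equal, and then identify their common value with $g_1-g_2$.

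For the equalities among the differences, no new input is needed beyond transposing the relations (1). Writing the first as $(r_{2,1}+r_{3,1})n_1 = r_{1,2}n_2 + r_{1,3}n_3$ and moving terms across gives $r_{2,1}n_1 - r_{1,2}n_2 = r_{1,3}n_3 - r_{3,1}n_1$; doing the same with the second relation gives $r_{3,2}n_2 - r_{2,3}n_3 = r_{2,1}n_1 - r_{1,2}n_2$. These two already yield the full chain $r_{1,3}n_3 - r_{3,1}n_1 = r_{2,1}n_1 - r_{1,2}n_2 = r_{3,2}n_2 - r_{2,3}n_3$, the third relation serving only as a consistency check. It then remains to identify this common value with $g_1-g_2$.

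For the identification I would compute $g_1-g_2$ directly from Proposition~\ref{T(S)}. Under the hypothesis of (i) the Frobenius number is $g_1 = (c_2-1)n_2+(r_{1,3}-1)n_3-n_1$, so the remaining pseudo-Frobenius number is $g_2 = (c_3-1)n_3+(r_{1,2}-1)n_2-n_1$; subtracting, the terms $-n_1$ cancel and one obtains $g_1-g_2 = (c_2-r_{1,2})n_2 + (r_{1,3}-c_3)n_3$. Substituting $c_2-r_{1,2} = r_{3,2}$ and $r_{1,3}-c_3 = -r_{2,3}$ gives $g_1-g_2 = r_{3,2}n_2 - r_{2,3}n_3$, which is exactly the common value found above, proving (i). (Equivalently, one may expand this common value via Lemma~\ref{4} into $r_{1,3}r_{2,1}r_{3,2} - r_{1,2}r_{2,3}r_{3,1}$ and appeal to Corollary~\ref{3}(i).)

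Part (ii) then follows by the identical computation with the two pseudo-Frobenius numbers interchanged: under its hypothesis $g_1 = (c_3-1)n_3+(r_{1,2}-1)n_2-n_1$, so both $g_1-g_2$ and each of the three differences simply reverse sign, and the chain of (ii) is the negative of the chain of (i), consistently with Corollary~\ref{3}(ii). I do not expect a genuine obstacle: the argument is entirely formal, and the only point requiring care is the bookkeeping, namely matching the exponents $a,b,c,d,e,f$ of Proposition~\ref{1aa} to the subscripts of the $r_{i,j}$ and keeping straight which pseudo-Frobenius number plays the role of the Frobenius number in each of the two cases.
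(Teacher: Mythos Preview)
Your proposal is correct and essentially follows the paper's own one-line proof, which simply cites Proposition~\ref{T(S)}, Lemma~\ref{4}, and Corollary~\ref{3}. Your primary route is in fact slightly more streamlined: by rearranging the relations~(1) and substituting $c_2-r_{1,2}=r_{3,2}$, $c_3-r_{1,3}=r_{2,3}$ directly into the difference from Proposition~\ref{T(S)}, you avoid the detour through the product formula of Corollary~\ref{3}; the paper's cited path (which you also record parenthetically) instead expands via Lemma~\ref{4} to reach $r_{1,3}r_{2,1}r_{3,2}-r_{1,2}r_{2,3}r_{3,1}$ and then matches with Corollary~\ref{3}(i).
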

\begin{proof} The proof follows immediately by Proposition~\ref{T(S)}, Lemma~\ref{4} and Corollary~\ref{3}.
\end{proof}

Let $S=\langle n_1,n_2,n_3\rangle$ be a numerical semigroup, $f\geq 3g(S)+1$ be an odd number and $T=\langle 2n_1,2n_2,2n_3,f-2g_1,f-2g_2\rangle$. Furthermore, let us consider $\psi:K[X,Y,Z,U,V]\rightarrow K[T]$ the $K$-algebra homomorphism defined by $\psi(X)=t^{2n_1}$, $\psi(Y)=t^{2n_2}$, $\psi(Z)=t^{2n_3}$, $\psi(U)=t^{f-2g_1}$ and $\psi(V)=t^{f-2g_2}$ and let $I_T=\ker\psi$.

\begin{prop}\label{7,8,9} In the above notation, one has that $U^2-m_1, UV-m_2, V^2-m_3$ are generators of $I_T$, where $m_1,m_2,$ and $m_3$ are monomials in $K[X,Y,Z]$.
\end{prop}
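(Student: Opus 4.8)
The plan is to argue in two steps: first that the three binomials lie in $I_T$ for suitable monomials $m_1,m_2,m_3\in K[X,Y,Z]$, and then that they are forced to appear in every minimal binomial generating set of $I_T$.

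For the first step I would locate the exponents of $m_1,m_2,m_3$ by showing that the three integers $f-2g_1$, $f-g_1-g_2$ and $f-2g_2$ all lie in $S$. Recall that $g_1=g(S)$ is the Frobenius number and $g_2$ the remaining pseudo-Frobenius number, so that $g_2<g_1=g(S)$ and every integer exceeding $g(S)$ belongs to $S$. From $f\geq 3g(S)+1$ one gets
\[ f-2g_1\geq g_1+1>g(S),\qquad f-2g_2>f-2g_1>g(S),\qquad f-g_1-g_2>f-2g_1>g(S), \]
the last two inequalities using $g_1-g_2>0$. Writing each of these three elements of $S$ as a non-negative combination of $n_1,n_2,n_3$ and doubling the coefficients yields monomials $m_1,m_2,m_3\in K[X,Y,Z]$ with $\psi(m_1)=t^{2(f-2g_1)}=\psi(U^2)$, $\psi(m_2)=t^{2f-2g_1-2g_2}=\psi(UV)$ and $\psi(m_3)=t^{2(f-2g_2)}=\psi(V^2)$. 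Hence the three binomials lie in $\ker\psi=I_T$, and this already produces the monomials named in the statement.

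For the second step I would exploit the parity grading that sends a monomial $X^aY^bZ^cU^dV^e$ to the parity of $d+e$. Because $2n_1,2n_2,2n_3$ are even while $f-2g_1,f-2g_2$ are odd (as $f$ is odd), the $T$-degree of such a monomial is even if and only if $d+e$ is even. I would then examine the three fibres $\psi^{-1}(t^{s})$ in the distinct degrees $s_1=2(f-2g_1)<s_2=2f-2g_1-2g_2<s_3=2(f-2g_2)$. A short degree comparison shows that among all monomials of degree $s_1$ the only one with $(d,e)\neq(0,0)$ is $U^2$ (any other admissible monomial involving $U$ or $V$ already has $T$-degree larger than $s_1$), and likewise $UV$ is the unique non-$X,Y,Z$ monomial in degree $s_2$ and $V^2$ the unique one in degree $s_3$. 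Since every element of $I_T$ is a $K$-combination of binomials between monomials of equal degree, reducing the monomial $U^2$ (resp.\ $UV$, $V^2$) requires a generator containing it, and such a generator occurs in neither of the other two special relations nor in any relation supported on $X,Y,Z$. This forces $U^2-m_1$, $UV-m_2$, $V^2-m_3$ to occur in a minimal generating set.

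The main obstacle is to make this indispensability argument fully precise, that is, to show that in each degree $s_i$ the monomial $U^2$, $UV$ or $V^2$ really is isolated from the $X,Y,Z$-part, so that $\tilde{H}_0(\Delta_{s_i})\neq 0$ and $\beta_{1,s_i}(K[T])\geq 1$ by Proposition~\ref{complex}. Concretely I would compute the simplicial complex $\Delta_{s_i}$ and verify that the vertex corresponding to $U$ (resp.\ $V$) lies in a connected component disjoint from the vertices supporting $m_i$; the delicate point is to exclude a chain joining the two parts through an edge such as $\{U,X\}$, $\{U,Y\}$ or $\{U,Z\}$, which amounts to deciding membership in $T$ of certain odd integers like $(f-2g_1)-2n_i$. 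Here I would combine the parity obstruction with the explicit formulas for $n_1,n_2,n_3$ in terms of the $r_{i,j}$ from Lemma~\ref{4} and Corollary~\ref{5}, together with the description of the $X,Y,Z$-relations as the maximal minors of the matrix in Proposition~\ref{1a}, in order to bound the degrees in which mixed $U,V$--$X,Y,Z$ relations can arise and thereby close the argument.
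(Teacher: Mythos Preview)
Your first step already matches the paper's entire proof. The paper reads ``generators of $I_T$'' in the weak sense of ``elements of $I_T$ (hence members of a generating set)'', and its argument is precisely your membership argument: from $f-2g_i\in S$ one writes $f-2g_i$ as a non-negative combination of $n_1,n_2,n_3$, doubles, and obtains $m_i$. The only cosmetic difference is how one sees $f-2g_1\in S$: the paper uses $f-2g_1\in T\subseteq\frac{T}{2}=S$, whereas you use the inequality $f-2g_1\ge g(S)+1$ coming from $f\ge 3g(S)+1$; both are valid and equally short. (For $UV$ your route via $f-g_1-g_2\in S$ is in fact a little cleaner than appealing to $T\subseteq S$.)

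Your second step is not part of what this proposition asserts or proves in the paper; the indispensability of these three binomials is only established later, in Proposition~\ref{7}, together with the remaining six generators. So you are attempting more than is needed here. That said, your sketch for minimality is on the right track and can be completed without the ``delicate point'' you flag: the odd integer $(f-2g_1)-2n_i$ can lie in $T$ only if it equals $2s+(f-2g_j)$ for some $s\in S$ and $j\in\{1,2\}$, which forces either $-n_i\in S$ or $g_2-g_1-n_i\in S$, both impossible. The analogous parity-plus-sign argument disposes of the edges $\{f-2g_2,2n_i\}$ in degrees $s_2$ and $s_3$. So the obstacle you anticipated does not actually arise, and no appeal to Lemma~\ref{4} or Corollary~\ref{5} is needed for this part.
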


\begin{proof} Since $f-2g_1\in T\subseteq \frac{T}{2}=S$, one has that $2(f-2g_1)\in 2S$, therefore $2(f-2g_1)=\alpha_1(2n_1)+\alpha_2(2n_2)+\alpha_3(2n_3)$, with $\alpha_1,\alpha_2,\alpha_3\in\mathbb{N}$ and $U^2-m_1\in I_T$, where $m_1=X^{\alpha_1}Y^{\alpha_2}Z^{\alpha_3}$. Similarly, one obtains that $UV-m_2$ and $V^2-m_3$ are in $I_T$, for some monomials $m_2$ and $m_3$ in $K[X,Y,Z]$.
\end{proof}

\medskip 

\noindent In order to look for more relations we can compute modulo $(U^2-m_1,UV-m_2,V^2-m_3)$. There are no relations of the form $U-m$ or $V-m$, $m\in K[X,Y,X]$, because $U$ and $V$ are of odd degree and $m$ of even degree. Then we look for relations of the form $f(X,Y,Z)U-g(X,Y,Z)V$ and for relations $f(X,Y,Z)=0$ (the latter ones are exactly the same as the relations in $K[S]$). All this is done in the next two propositions.


\begin{prop}\label{6} Let $g_1=(c_2-1)n_2+(r_{1,3}-1)n_3-n_1$. Then the maximal minors of the matrix

$$
N=\begin{pmatrix}X^{r_{3,1}} & Y^{r_{1,2}} & Z^{r_{2,3}} & U\\ Z^{r_{1,3}} & X^{r_{2,1}} & Y^{r_{3,2}} & V
\end{pmatrix}
$$

\noindent are a subset of minimal generators of $I_T$.
\end{prop}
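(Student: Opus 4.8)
The plan is to exhibit explicit binomial relations coming from the maximal minors of $N$ and then verify they are both genuine elements of $I_T$ and part of a minimal generating set. First I would write out the four maximal minors obtained by deleting one column from $N$. Deleting the last column gives precisely the minor $X^{r_{3,1}}Y^{r_{3,2}}-Z^{r_{1,3}+r_{2,3}}$ type expression, which is (up to the identifications coming from Proposition~\ref{1a}) one of the three defining relations of $K[S]$ pulled back along $\psi$; since $\psi$ restricted to $X,Y,Z$ agrees with $\varphi$ up to doubling the degrees, these ``$S$-relations'' automatically lie in $I_T$. The three minors involving the last column have the shape $f(X,Y,Z)U - g(X,Y,Z)V$, for instance $X^{r_{3,1}}V - Z^{r_{1,3}}U$, and these are exactly the mixed relations flagged in the discussion preceding the proposition.

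Next I would check membership in $I_T$ for the mixed minors. For the minor $X^{r_{3,1}}V - Z^{r_{1,3}}U$, applying $\psi$ sends it to $t^{r_{3,1}(2n_1)+(f-2g_2)} - t^{r_{1,3}(2n_3)+(f-2g_1)}$, so I must verify $2r_{3,1}n_1 - 2g_2 = 2r_{1,3}n_3 - 2g_1$, i.e.\ $r_{1,3}n_3 - r_{3,1}n_1 = g_1-g_2$. This is precisely the identity supplied by Corollary~\ref{5}(i) under the hypothesis $g_1=(c_2-1)n_2+(r_{1,3}-1)n_3-n_1$, so the minor maps to zero. I would run the analogous computation for the other two mixed minors, using the remaining equalities $r_{2,1}n_1 - r_{1,2}n_2 = g_1-g_2$ and $r_{3,2}n_2 - r_{2,3}n_3 = g_1-g_2$ from the same corollary; this is the step where the exact form of the pseudo-Frobenius numbers is used, and it is the crux of why the hypothesis on $g_1$ is imposed rather than its alternative.

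The harder part will be showing these minors belong to a \emph{minimal} generating set, not merely that they lie in $I_T$. Here I would argue by grading/degree considerations: each minor is homogeneous of a definite degree, the mixed minors have odd-parity degree (they involve exactly one of $U,V$) whereas the previously found relations $U^2-m_1, UV-m_2, V^2-m_3$ have even degree and the $S$-relations live in $K[X,Y,Z]$, so no minor can be written as an $A$-linear combination of relations of strictly smaller degree together with the others without a degree obstruction. Concretely I would compare the multidegrees and observe that a minor of the form $f U - g V$ cannot be produced from the quadratic-in-$U,V$ relations (whose lowest terms are degree two in $U,V$) nor from pure $K[X,Y,Z]$-relations, which forces it into any minimal generating set. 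To make this rigorous I would invoke Proposition~\ref{complex}, translating minimality into the nonvanishing of the appropriate reduced homology $\tilde H_0(\Delta_s,K)$ at the relevant degrees $s$, thereby confirming that $\beta_{1,s}(K[T])\neq 0$ exactly where these minors sit.

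Finally I would assemble the pieces: the four minors of $N$ are all in $I_T$ (the $S$-minor trivially, the three mixed minors by Corollary~\ref{5}(i)), and a degree/parity argument shows none is redundant, so they form a subset of a minimal generating set as claimed. The main obstacle I anticipate is the minimality step rather than the membership step, since membership is a direct substitution into the identities of Corollary~\ref{5}, while minimality requires carefully ruling out that a mixed minor is an $A$-combination of the quadratic relations and the $S$-relations; the parity of the degree in the odd-degree variables $U,V$ is the clean device I would rely on to close that gap.
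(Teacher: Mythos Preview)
Your membership verification for the mixed minors via Corollary~\ref{5}(i) is correct and is essentially how the paper proceeds as well. Two points, however, separate your sketch from a complete proof.

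First, a $2\times 4$ matrix has $\binom{4}{2}=6$ maximal ($2\times 2$) minors, not four. Three of them come from pairs among the first three columns---these are exactly the three generators of $I_S$ from Proposition~\ref{1a}, not a single relation---and three involve the last column. Your phrase ``the four maximal minors obtained by deleting one column'' suggests a confusion with the $3\times 4$ situation.

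Second, and more substantively, your minimality argument does not close. The parity observation shows that a mixed minor $fU-gV$ cannot lie in the $A$-span of the pure $K[X,Y,Z]$-relations or of the relations $U^2-m_1,\ UV-m_2,\ V^2-m_3$ alone. But parity does \emph{not} exclude that one of the three mixed minors is redundant modulo a mixed relation of strictly smaller degree (which is also of odd parity), or modulo a combination such as $U$ times an $S$-relation plus an even multiple of another mixed relation. The paper handles this differently: it considers the $K[X,Y,Z]$-submodule $W\subset K[X,Y,Z]^2$ of all columns $\binom{f}{g}$ with $\deg f-\deg g=2(g_1-g_2)$, and argues via Corollary~\ref{5} together with the Hilbert--Burch presentation of $I_S$ (Propositions~\ref{1aa} and~\ref{1a}) that the three columns of $M$ form a minimal generating set for $W$. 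This is precisely the statement that among all relations of the shape $fU-gV$ the three mixed minors are minimal---exactly the point that parity alone cannot detect. Your proposed fallback to computing $\tilde H_0(\Delta_s,K)$ at the three relevant odd degrees is in principle viable, but you do not carry it out, and doing so would amount to re-deriving the module-theoretic fact by hand.
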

\begin{proof} Using Proposition \ref{1aa}, we know that the minors of $N$ obtained by considering only the first three columns are a subset of minimal generators of $I_T$. 

We are looking for elements in $I_T$ of the kind $f(X,Y,Z)U-g(X,Y,Z)V$, that is for

$$
\det\begin{pmatrix}f & V\\ g & U
\end{pmatrix}.
$$
Since $\deg(V)-\deg(U)=2(g_1-g_2)$, then we must have $\deg(f)-\deg(g)=2(g_1-g_2)$. Thus we are looking for columns
$$
 \begin{pmatrix}f\\ g
\end{pmatrix}\in K[X,Y,Z]^2
$$
\noindent such that $\deg(f)-\deg(g)=2(g_1-g_2)$. Let $W$ be the $K[X,Y,Z]$-submodule of such columns. We need to find a minimal set of generators for $W$. But, from Corollary~\ref{5} and Propositions \ref{1aa} and \ref{1a}, we get that the minimal generators for $W$ are the $2\times 2$-minors of the matrix    

$$
M=\begin{pmatrix}X^{r_{3,1}} & Y^{r_{1,2}} & Z^{r_{2,3}}\\ Z^{r_{1,3}} & X^{r_{2,1}} & Y^{r_{3,2}}
\end{pmatrix}.
$$

This means that the minimal generators of $W$ are the columns of $M$.
\end{proof}
\rm As before, we get a similar result for the case when $g_1=(c_2-1)n_2+(r_{13}-1)n_3-n_1$:

\begin{prop}\label{6'} Let $g_1=(c_3-1)n_3+(r_{1,2}-1)n_2-n_1$. Then the maximal minors of the matrix

$$
N=\begin{pmatrix}X^{r_{3,1}} & Y^{r_{1,2}} & Z^{r_{2,3}} & V\\ Z^{r_{1,3}} & X^{r_{2,1}} & Y^{r_{3,2}} & U
\end{pmatrix}
$$

\noindent are a subset of minimal generators of $I_T$.
\end{prop}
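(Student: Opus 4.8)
The plan is to run the argument of Proposition~\ref{6} essentially unchanged, the sole difference being that the standing hypothesis $g_1=(c_3-1)n_3+(r_{1,2}-1)n_2-n_1$ now places us in case (ii) of Corollary~\ref{5} rather than case (i). First I would note, exactly as there, that the three maximal minors of $N$ formed from the first three columns involve only $X,Y,Z$; by Propositions~\ref{1aa} and~\ref{1a} they are precisely the minors of $M$, hence generate $I_S$, and since $I_S=I_T\cap K[X,Y,Z]$ they lie in $I_T$. These are the even-degree members of the sought generating set and require no recomputation.

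The substance is in the three mixed minors built from the fourth column. Writing such a relation as $fU-gV$ with $f,g\in K[X,Y,Z]$, membership in $I_T$ forces $\deg f-\deg g=\deg V-\deg U=2(g_1-g_2)$; let $W$ be the $K[X,Y,Z]$-module of columns $(f,g)$ meeting this condition. Here Corollary~\ref{5}(ii) does the real work: it gives $r_{3,1}n_1-r_{1,3}n_3=r_{1,2}n_2-r_{2,1}n_1=r_{2,3}n_3-r_{3,2}n_2=g_1-g_2$, so that each column of $M$, read as top-entry minus bottom-entry, has degree exactly $+2(g_1-g_2)$ and therefore lies in $W$. This is the single place where the present case diverges from Proposition~\ref{6}: in case (i) it was the entrywise-reversed columns that lay in $W$, which is why $U$ sat in the top row of the last column there, whereas Corollary~\ref{5}(ii) puts the heavier monomials in the top row $X^{r_{3,1}},Y^{r_{1,2}},Z^{r_{2,3}}$ and hence forces $U$ into the bottom row of the last column of $N$. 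Pairing the columns of $M$ with $(V,U)$ then yields the minors $X^{r_{3,1}}U-Z^{r_{1,3}}V$, $Y^{r_{1,2}}U-X^{r_{2,1}}V$ and $Z^{r_{2,3}}U-Y^{r_{3,2}}V$, and the three displayed equalities show at once that each lies in $I_T$.

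To finish I would argue, just as in Proposition~\ref{6}, that $W$ is minimally generated as a $K[X,Y,Z]$-module by the columns of $M$; this is dictated by the presentation of $I_S$ through $M$ (Propositions~\ref{1aa} and~\ref{1a}) together with Corollary~\ref{5}. The three mixed minors are then in bijection with a minimal generating set of $W$, so they are $K$-linearly independent modulo $(X,Y,Z,U,V)\,I_T$ and thus form part of a minimal generating set of $I_T$. Together with the three even minors this gives the claim.

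The only genuine obstacle is this minimality statement for $W$: one must exclude admissible columns of smaller $U$-weight that are not $K[X,Y,Z]$-combinations of the columns of $M$, and it is precisely the known structure of $I_S$ --- that $M$ is a $2\times 3$ matrix whose minors generate $I_S$ and whose columns record all first syzygies among $t^{n_1},t^{n_2},t^{n_3}$ --- that rules these out. Everything else is sign bookkeeping, which Corollary~\ref{5}(ii) settles.
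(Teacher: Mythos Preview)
Your argument is correct and is precisely the paper's intended proof: the paper does not give a separate proof of Proposition~\ref{6'} but simply remarks ``As before, we get a similar result,'' meaning one reruns the proof of Proposition~\ref{6} with Corollary~\ref{5}(ii) in place of Corollary~\ref{5}(i), exactly as you do. Your explanation of why the roles of $U$ and $V$ swap in the last column is a welcome clarification that the paper itself omits.
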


\noindent By what is written above in the section we soon get the following proposition.

\begin{prop}\label{7}
If $g_1=(c_2-1)n_2+(r_{1,3}-1)n_3-n_1$, then $I_T$ is minimaly generated by $\{X^{r_{3,1}+r_{2,1}}-Y^{r_{1,2}}Z^{r_{1,3}},\ X^{r_{3,1}}Y^{r_{3,2}}-Z^{r_{1,3}+r_{2,3}},\ Y^{r_{1,2}+r_{3,2}}-X^{r_{2,1}}Z^{r_{2,3}},\ X^{r_{3,1}}V$ $-Z^{r_{1,3}}U,\ Y^{r_{1,2}}V-X^{r_{2,1}}U,\  Z^{r_{2,3}}V-Y^{r_{3,2}}U,\ U^2-m_1, UV-m_2,\ V^2-m_3\}$, with $m_1$, $m_2$ and $m_3$ monomials in $K[X,Y,Z]$. If $g_1=(c_3-1)n_3+(r_{12}-1)n_2-n_1$, then $I_T$ is minimaly generated by the set $\{X^{r_{3,1}+r_{2,1}}-Y^{r_{1,2}}Z^{r_{1,3}},\ X^{r_{3,1}}Y^{r_{3,2}}-Z^{r_{1,3}+r_{2,3}},\ Y^{r_{1,2}+r_{3,2}}-X^{r_{2,1}}Z^{r_{2,3}}, \ X^{r_{3,1}}U-Z^{r_{1,3}}V,\ Y^{r_{1,2}}U-X^{r_{2,1}}V,\ Z^{r_{2,3}}U-Y^{r_{3,2}}V,\ $ $U^2-m_1,\ UV-m_2,\ V^2-m_3\}$, with $m_1$, $m_2$ and $m_3$ monomials in $K[X,Y,Z]$. 
\end{prop}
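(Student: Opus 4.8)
The plan is to assemble the minimal generating set of $I_T$ by combining three sources of relations that have already been identified, and then to argue that together they both generate $I_T$ and are minimal. First I would collect the relations coming purely from $K[S]$: by Proposition~\ref{1a} these are the three maximal minors of the matrix $M$, namely $X^{r_{3,1}+r_{2,1}}-Y^{r_{1,2}}Z^{r_{1,3}}$, $X^{r_{3,1}}Y^{r_{3,2}}-Z^{r_{1,3}+r_{2,3}}$ and $Y^{r_{1,2}+r_{3,2}}-X^{r_{2,1}}Z^{r_{2,3}}$. (Here one must carry out the routine expansion of the $2\times 2$ minors of $M$ to check that they take exactly this binomial form.) These are relations of the type $f(X,Y,Z)=0$ and, as noted in the discussion preceding Proposition~\ref{6}, they are precisely the relations of $K[S]$.

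Next I would bring in the mixed relations of the form $f(X,Y,Z)U-g(X,Y,Z)V$. By Proposition~\ref{6} (in the case $g_1=(c_2-1)n_2+(r_{1,3}-1)n_3-n_1$), these are the three maximal minors of $N$ that involve the last column, i.e.\ the minors pairing a column of $M$ against $\binom{U}{V}$. Expanding each such $2\times 2$ minor gives exactly $X^{r_{3,1}}V-Z^{r_{1,3}}U$, $Y^{r_{1,2}}V-X^{r_{2,1}}U$ and $Z^{r_{2,3}}V-Y^{r_{3,2}}U$; in the symmetric case $g_1=(c_3-1)n_3+(r_{1,2}-1)n_2-n_1$ one uses Proposition~\ref{6'} instead and the roles of $U$ and $V$ are interchanged, yielding $X^{r_{3,1}}U-Z^{r_{1,3}}V$, $Y^{r_{1,2}}U-X^{r_{2,1}}V$ and $Z^{r_{2,3}}U-Y^{r_{3,2}}V$. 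Finally, the three relations $U^2-m_1$, $UV-m_2$, $V^2-m_3$ are supplied directly by Proposition~\ref{7,8,9}.

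It remains to argue that these nine binomials generate $I_T$ and that no proper subset does. For generation I would invoke the structural analysis carried out in the paragraph after Proposition~\ref{7,8,9}: since $U$ and $V$ have odd degree, any element of $I_T$ can be reduced modulo $U^2-m_1,UV-m_2,V^2-m_3$ to a $K[X,Y,Z]$-combination that is $U$-linear, $V$-linear, or $U,V$-free; the $U,V$-free part lies in $I_S$ and is handled by the minors of $M$, while the degree constraint $\deg(f)-\deg(g)=2(g_1-g_2)$ forces the mixed part to lie in the submodule $W$ whose minimal generators (Proposition~\ref{6}) are exactly the three mixed minors. Thus every relation is accounted for. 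Minimality should follow because the three groups of generators are supported in distinct monomial-degree ranges (the $U,V$-free relations, the $U,V$-linear relations, and the quadratic $U,V$ relations), so none can be written in terms of the others. The main obstacle I anticipate is exactly this assembly-and-reduction step: making rigorous the claim that reduction modulo the quadratic relations always lands in the span of the remaining six, and simultaneously verifying that the reduction does not introduce hidden dependencies that would destroy minimality. The individual minor expansions are routine, but the bookkeeping that shows the nine relations are both sufficient and irredundant is where the care is needed.
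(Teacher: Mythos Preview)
Your proposal is correct and follows precisely the paper's approach: the paper's entire ``proof'' is the single sentence ``By what is written above in the section we soon get the following proposition,'' and what you have written is exactly an unpacking of that sentence---assembling the three minors of $M$ (Proposition~\ref{1a}), the three mixed $U,V$ minors (Proposition~\ref{6} or~\ref{6'}), the three quadratic relations (Proposition~\ref{7,8,9}), and the reduction argument in the paragraph following Proposition~\ref{7,8,9}. Your explicit minimality discussion and your caveat about the bookkeeping in the reduction step are in fact more detailed than anything the paper provides.
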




\medskip 

In the next proposition we get the Betti numbers $\beta_{i,j}(K[T])$ for $i\ne 2$. For simplicity, we denote
 
$$
B_1=\{2n_2r_{1,2}+2n_3r_{1,3},\ 2n_1r_{3,1}+2n_2r_{3,2},\ 2n_1r_{2,1}+2n_3r_{2,3},
\ 2n_3r_{1,3}+(f-2g_1),
$$
$$\ \ \ \ \ \ \ \ \ \ \ \ \ 2n_1r_{2,1}+(f-2g_1),\ 2n_2r_{3,2}+(f-2g_1),2(f-2g_1),\ (f-2g_1)+(f-2g_2),$$
$$\ 2(f-2g_2)\},\ \ \ \ \ \ \ \ \ \ \ \ \ \ \ \ \ \ \ \ \ \ \ \ \ \ \ \ \ \ \ \ \ \ \ \ \ \ \ \ \ \ \ \ \ \ \ \ \ \ \ \ \ \ \ \ \ \ \ \ \ \ \ \ \ \ \ \ \ \ \ \ \ \ \ \ \ \ \ \ \ \ \ \ \ \ \ \ 
$$

$$
B_2=\{2n_2r_{1,2}+2n_3r_{1,3},\ 2n_1r_{3,1}+2n_2r_{3,2},\ 2n_1r_{2,1}+2n_3r_{2,3},\ 2n_3r_{2,3}+(f-2g_1),
$$
$$
\ \ \ \ \ \ \ \ \ \ \ \ \ 2n_1r_{3,1}+(f-2g_1),\ 2n_2r_{1,2}+(f-2g_1), 2(f-2g_1), (f-2g_1)+(f-2g_2),$$
$$ 2(f-2g_2)\},\ \ \ \ \ \ \ \ \ \ \ \ \ \ \ \ \ \ \ \ \ \ \ \ \ \ \ \ \ \ \ \ \ \ \ \ \ \ \ \ \ \ \ \ \ \ \ \ \ \ \ \ \ \ \ \ \ \ \ \ \ \ \ \ \ \ \ \ \ \ \ \ \ \ \ \ \ \ \ \ \ \ \ \ \ \
$$

\medskip

\noindent and with $\alpha=2(n_1+n_2+n_3)+(f-2g_1)+(f-2g_2)+f$. Moreover we denote by $B'_i$ the sets $\alpha-B_i:=\{\alpha-b\ |\ b\in B_i\}$ where $i=1,2$. 

\begin{prop}\label{8}
Let $S=\langle n_1,n_2,n_3\rangle$ be a numerical semigroup which is not symmetric, $T(S)=\{g_1,g_2\}$ be the set of pseudo-Frobenius numbers of $S$ (where $g_1=g(S)$), $f\geq 3g(S)+1$ be an odd number and $T=\langle 2n_1,2n_2,2n_3,f-2g_1,f-2g_2\rangle$. Then 

\[\beta_{0,j}(K[T])=
\left\{\begin {array}{ll}
			1, & \mbox{ if}\ j=0,\\
			0, & \mbox{ otherwise},
	\end{array}\right. \] 

\[\beta_{4,j}(K[T])=
\left\{\begin {array}{ll}
			1, & \mbox{ if}\ j=\alpha,\\
			0, & \mbox{ otherwise}
	\end{array}\right. \] 
and $\beta_{i,j}(K[T])=0$ for every $i\ge 5$ and every $j$. 

\noindent Moreover, if $g_1=(c_2-1)n_2+(r_{1,3}-1)n_3-n_1$, then 
\[\beta_{1,j}(K[T])=
\left\{\begin {array}{ll}
			1, & \mbox{ if}\ j\in B_1,\\
			0, & \mbox{ otherwise},
	\end{array}\right. \]
and

\[\beta_{3,j}(K[T])=
\left\{\begin {array}{ll}
			1, & \mbox{ if}\ j\in B'_1,\\
			0, & \mbox{ otherwise}.
	\end{array}\right. \] 
Otherwise, if $g_1=(c_3-1)n_3+(r_{1,2}-1)n_2-n_1$, then 
\[\beta_{1,j}(K[T])=
\left\{\begin {array}{ll}
			1, & \mbox{ if}\ j\in B_2,\\
			0, & \mbox{ otherwise},
	\end{array}\right. \] 

\[\beta_{3,j}(K[T])=
\left\{\begin {array}{ll}
			1, & \mbox{ if}\ j\in B'_2,\\
			0, & \mbox{ otherwise}.
	\end{array}\right. \] 

\begin{proof} For the zero Betti numbers there is nothing to prove. 

By $\beta_{4,j}=\dim_K\tilde{H}_{3}(\Delta_j)$, the only possibility for $\beta_{4,j}$ to be different from zero is for $\beta_{4,j}=1$ and $\Delta_j$ being the empty solid with vertices $\{2n_1,2n_2,2n_3,f-2g_1,f-2g_2\}$, that is $j-(2n_1+2n_2+2n_3+(f-2g_1)+(f-2g_2))$ must not be in $T$  while each time we subtract the sum of $4$ different generators of $T$ to $j$ the result must be in $T$. This can happen only for $j=\alpha$, reminding that $f$ is the Frobenius number of $T$.

The formula $\beta_{i,j}=\dim_K\tilde{H}_{i-1}(\Delta_j)$ explains why $\beta_{i,j}(K[T])=0$ for every $i\ge 5$ and every $j$.

The results for the first Betti numbers follow by Proposition \ref{7}. 

For the third Betti numbers, we note that, since $T$ is a symmetric numerical semigroup, we have $\beta_{3,j}+\beta_{1,\alpha-j}=\beta_{4,\alpha}$ for every $j$.  
\end{proof}
\end{prop}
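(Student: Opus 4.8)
The plan is to route everything through Proposition~\ref{complex}, identifying $\beta_{i,j}(K[T])$ with $\dim_K\tilde{H}_{i-1}(\Delta_j,K)$, and to use that $\Delta_j$ is a simplicial complex on the five vertices $\{2n_1,2n_2,2n_3,f-2g_1,f-2g_2\}$. Two consequences are immediate. Since such a complex has dimension at most $4$, its reduced homology vanishes in degrees $\geq 4$, so $\beta_{i,j}(K[T])=0$ for all $i\geq 5$. And for $i=0$ one checks directly that $\Delta_j$ reduces to the single empty face exactly when $j=0$ (no generator degree is zero and $0\in T$), while for every other $j\in T$ the complex already contains a vertex; hence $\tilde{H}_{-1}$ is nonzero only at $j=0$, giving the stated $\beta_{0,j}$. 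This settles the outer rows of the table and leaves the first, third, and fourth Betti numbers.

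For $\beta_{1,j}(K[T])$ I would just read off the degrees of the minimal generating set of $I_T$ furnished by Proposition~\ref{7}. The three pure relations in $X,Y,Z$ have degrees $2n_2r_{1,2}+2n_3r_{1,3}$, $2n_1r_{3,1}+2n_2r_{3,2}$, $2n_1r_{2,1}+2n_3r_{2,3}$, and the three relations $U^2-m_1$, $UV-m_2$, $V^2-m_3$ contribute $2(f-2g_1)$, $(f-2g_1)+(f-2g_2)$, $2(f-2g_2)$. The three mixed generators such as $X^{r_{3,1}}V-Z^{r_{1,3}}U$ are homogeneous precisely because of the degree identities in Corollary~\ref{5}, which also allow me to write their common degree in the normalized form $2n_3r_{1,3}+(f-2g_1)$, and similarly for the others. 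Collecting these nine (pairwise distinct) degrees reproduces exactly $B_1$ in the case $g_1=(c_2-1)n_2+(r_{1,3}-1)n_3-n_1$ and $B_2$ in the complementary case, so $\beta_{1,j}=1$ precisely on $B_1$ (resp.\ $B_2$).

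The fourth Betti number is the delicate point. Here $\beta_{4,j}=\dim_K\tilde{H}_{3}(\Delta_j)$ is nonzero only when $\Delta_j$ is the full boundary of the $4$-simplex, i.e.\ all five tetrahedra are faces but the top $4$-face is not; in that case $\Delta_j\cong S^3$ and $\tilde{H}_3=K$. Setting $w=j-(\alpha-f)$, where $\alpha-f=2n_1+2n_2+2n_3+(f-2g_1)+(f-2g_2)$ is the sum of all generator degrees, the face conditions translate into $w\notin T$ together with $w+e\in T$ for every generator degree $e$. Since the generators generate $T$, this upgrades to $w+s\in T$ for all $s\in T\setminus\{0\}$, that is $w\in T(T)$. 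As $T$ is symmetric, $T(T)=\{f\}$, so $w=f$, i.e.\ $j=\alpha$, is the unique solution. This yields $\beta_{4,\alpha}=1$, $\beta_{4,j}=0$ otherwise, and simultaneously pins down $\alpha$ as the socle degree. The genuine obstacle lies exactly here: one must justify both the topological identification of $\Delta_\alpha$ with $S^3$ and the passage from ``$w+e\in T$ for generators $e$'' to ``$w$ is a pseudo-Frobenius number'', after which symmetry closes the argument.

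Finally, because $T$ is symmetric, $K[T]$ is Gorenstein, and being a one-dimensional Cohen--Macaulay quotient of a polynomial ring in five variables it has projective dimension $4$ by Auslander--Buchsbaum. Its minimal free resolution is therefore self-dual, $\beta_{i,j}(K[T])=\beta_{4-i,\,\alpha-j}(K[T])$, with the shift $\alpha$ already identified by the $\beta_4$ computation. In particular $\beta_{3,j}=\beta_{1,\alpha-j}$, so $\beta_{3,j}=1$ exactly when $\alpha-j\in B_1$ (resp.\ $B_2$), i.e.\ when $j\in B_1'=\alpha-B_1$ (resp.\ $B_2'$). I would note that $\beta_2$ is deliberately omitted: it sits in the middle homological degree and is not recoverable from duality together with the boundary data alone.
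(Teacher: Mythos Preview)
Your proposal is correct and follows essentially the same route as the paper: the simplicial complex interpretation handles $\beta_0$, $\beta_4$, and the vanishing for $i\ge 5$; Proposition~\ref{7} gives the nine degrees in $B_1$ (resp.\ $B_2$) for $\beta_1$; and Gorenstein duality for the symmetric $T$ yields $\beta_{3,j}=\beta_{1,\alpha-j}$. Your write-up is in fact more explicit than the paper's at each step (e.g.\ spelling out the pseudo-Frobenius argument for $\beta_4$ and the Auslander--Buchsbaum justification of the duality shift), but the underlying argument is the same.
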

\medskip 

For simplicity of notation, we denote by
 
$$
M=\{2n_2r_{1,2}+2n_3r_{1,3}, 2n_1r_{3,1}+2n_2r_{3,2}, 2n_1r_{2,1}+2n_3r_{2,3}\}
$$

\noindent and 

$$
P=\{2(f-2g_1), (f-2g_1)+(f-2g_2), 2(f-2g_2)\}
$$ 

\noindent the subsets of $B_1\cap B_2$, and by

$$
N_1=\{2n_3r_{1,3}+(f-2g_1), 2n_1r_{2,1}+(f-2g_1), 2n_2r_{3,2}+(f-2g_1)\}
$$

\noindent and 

$$
N_2=\{2n_3r_{2,3}+(f-2g_1), 2n_1r_{3,1}+(f-2g_1), 2n_2r_{1,2}+(f-2g_1)\}
$$

\noindent the subsets of $B_1$ and $B_2$ respectively.

Using the same terminology as above, we denote by $A'$ the set $\alpha-A:=\{\alpha-a\ |\ a\in A\}$ and by $A+n$ the set $\{a+n\ |\ a\in A,\ n\in\mathbb{N}\}$.

\begin{prop}\label{9}
Let $S=\langle n_1,n_2,n_3\rangle$ be numerical semigroup which is not symmetric, $T(S)=\{g_1,g_2\}$ be the set of pseudo-Frobenius numbers of $S$ (where $g_1=g(S)$) and $f\geq 3g(S)+1$ be an odd number. Let $T_1=\langle 2n_1,2n_2,2n_3,f-2g_1,f-2g_2\rangle$ and $T_2=\langle 2n_1,2n_2,2n_3,f-2g_1+2,f-2g_2+2\rangle$. Finally, assume that in what follows $i\ne 2$. Then, if $g_1=(c_2-1)n_2+(r_{1,3}-1)n_3-n_1$, we have 
\[\beta_{i,j}(K[T_2])=
\left\{\begin {array}{ll}
			\beta_{i,j}(K[T_1]), & \mbox{ if}\ j\in M,\\
			\beta_{i,j}(K[T_1])+1, & \mbox{ if}\ \in (N_1+2)\cup (P+4)\cup (M'+6)\cup \\
			 & \ \cup(N'_1+4)\cup \cup(P'+2)\cup (\{\alpha\}+6),\\
			0, & \mbox{ otherwise}.
	\end{array}\right. \]
Moreover, if $g_1=(c_3-1)n_3+(r_{1,2}-1)n_2-n_1$, then
\[\beta_{i,j}(K[T_2])=
\left\{\begin {array}{ll}
			\beta_{i,j}(K[T_1]), & \mbox{ if}\ j\in M,\\
			\beta_{i,j}(K[T_1])+1, & \mbox{ if}\ \in (N_2+2)\cup (P+4)\cup (M'+6)\cup\\
			& \ \cup (N'_2+4)\cup(P'+2)\cup (\{\alpha\}+6),\\
			0, & \mbox{ otherwise}.
	\end{array}\right. \]
\end{prop}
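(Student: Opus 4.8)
The plan is to read off both Betti tables from Proposition~\ref{8} and compare them degree by degree. The starting point is that $T_2$ is nothing but the semigroup attached to $S$ by the construction preceding Proposition~\ref{half} with the odd integer $f$ replaced by $f+2$: indeed $(f+2)-2g_i=(f-2g_i)+2$ for $i=1,2$, and $f+2$ is again odd and $\geq 3g(S)+1$. Since $g_1,g_2$ and all the $r_{i,j}$ depend only on $S$, the branch of the case distinction in Proposition~\ref{8} (which of the two expressions equals the Frobenius number $g_1$) is the same for $T_1$ and for $T_2$. Consequently Proposition~\ref{8} applies verbatim to $T_2$, computing every $\beta_{i,j}(K[T_2])$ with $i\neq 2$ from the same formulas with $f$ replaced throughout by $f+2$; the restriction $i\neq 2$ is inherited from that proposition.

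Next I would track how the data of Proposition~\ref{8} transform under $f\mapsto f+2$. The quantity $\alpha=2(n_1+n_2+n_3)+(f-2g_1)+(f-2g_2)+f$ contains $f$ three times, so it becomes $\alpha+6$; this already gives $\beta_{4,j}(K[T_2])=1$ exactly at $j=\alpha+6$, i.e.\ on $\{\alpha\}+6$. For the first Betti numbers I would split $B_1$ (and symmetrically $B_2$) by the multiplicity with which $f$ occurs: the block $M$ is independent of $f$, the block $N_1$ (resp.\ $N_2$) carries $f$ once, and the block $P$ carries $f$ twice. Hence replacing $f$ by $f+2$ leaves $M$ fixed, shifts $N_1$ (resp.\ $N_2$) to $N_1+2$ (resp.\ $N_2+2$), and shifts $P$ to $P+4$, so that $\beta_{1,j}(K[T_2])=1$ precisely on $M\cup(N_1+2)\cup(P+4)$ (resp.\ $M\cup(N_2+2)\cup(P+4)$). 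Applying the dual half of Proposition~\ref{8} to $T_2$, the third Betti numbers are supported on $(\alpha+6)-\bigl(M\cup(N_1+2)\cup(P+4)\bigr)$, which a one-line computation rewrites as $(M'+6)\cup(N_1'+4)\cup(P'+2)$, the prime denoting $\alpha-(\,\cdot\,)$ with the value of $\alpha$ belonging to $T_1$ (and with $N_2'$ in the second branch).

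Finally I would compare the two tables entry by entry. On $M$ both rings have $\beta_1=1$, giving the first case. The degrees in $N_1+2$ and $P+4$ (new supports of $\beta_1$ for $T_2$), in $M'+6$, $N_1'+4$ and $P'+2$ (new supports of $\beta_3$ for $T_2$), and $\alpha+6$ (the new support of $\beta_4$) each carry a Betti number of $T_2$ in a homological degree where $T_1$ vanishes, producing the $+1$; all remaining degrees give $\beta_{i,j}(K[T_2])=0$, the third case (this absorbs the degrees $N_1,P,M',N_1',P',\alpha$, where it is $T_1$ that is nonzero and $T_2$ that vanishes). The step that genuinely needs care is checking that, in each fixed homological degree, the relevant blocks do not overlap, so that every ``$+1$'' upgrades a genuine $0$ to a $1$. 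Cross-block overlaps (e.g.\ $M$ against $N_\bullet+2$ or $P+4$) are excluded by the hypothesis $f\geq 3g(S)+1$, since the three primal blocks grow like $0,f,2f$ in $f$ and their duals like $3f,2f,f$; the remaining, $f$-independent coincidences — whether two elements of a single block differ by the pertinent shift $2$, $4$ or $6$ — reduce to inequalities on $g_1-g_2$ and the $r_{i,j}$ through Corollaries~\ref{3} and~\ref{5}, and this is the point at which the combinatorics must be pinned down. The trivial equality $\beta_{0,0}=1$ for both rings is understood separately.
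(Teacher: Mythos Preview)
Your approach is exactly the paper's: the proof there reads, in its entirety, ``This follows easily by Proposition~\ref{8} and by definitions of $T_1$ and $T_2$.'' Your proposal simply unfolds that sentence --- recognise $T_2$ as the construction for the odd integer $f+2$, apply Proposition~\ref{8} to both semigroups, and track the degree sets $M$, $N_\bullet$, $P$ and their duals under the substitution $f\mapsto f+2$.

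Your caution about disjointness of the shifted blocks is well placed and goes beyond what the paper checks. The statement as written tacitly assumes, for example, that $(N_1+2)\cup(P+4)$ misses $B_1$, so that $\beta_{1,j}(K[T_1])=0$ at those degrees and the ``$+1$'' is literally correct; the paper's one-line proof does not verify this. So on this point you are being more careful than the original, not less, and your outline of how to rule out overlaps (separating blocks by their growth in $f$ and then invoking Corollaries~\ref{3} and~\ref{5} for the residual coincidences) is the right shape for a complete argument.
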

\begin{proof} This follows easily by Proposition~\ref{8} and by definitions of $T_1$ and $T_2$.

\end{proof}
\begin{rem}\rm For each concrete example we can also determine $\beta_{2,j}(K[T])$. We
give an example. Let $S=\langle 3,5,7\rangle$ and $T=\langle 6,10,14,15\rangle$. Then $S=\frac{T}{2}$. We use that
the Hilbert series of $K[T]$ is
$$H(K[T],t)=\frac{\sum(-1)^i\beta_{i,j}t^j}{(1-t^6)(1-t^{10})(1-t^{14})(1-t^{15})}.$$
Since we know by Proposition \ref{9} that $\beta_{1,j}=1$ for $j=20,24,28,30$ and $\beta_{3,j}=1$ for
$j=64,68$, we can compute $\beta_{2,j}$ from the Hilbert series of $K[T]$. Indeed, we may also write the
Hilbert series of $k[T]$ as
$$H(K[T],t)=1+t^6+t^{10}+t^{12}+t^{14}+t^{15}+t^{16}+t^{18}+t^{20}+t^{21}+t^{22}+
\frac{t^{24}}{1-t}.$$ A short calculation gives that $\beta_{2,j}=1$ for
$j=34,38,50,54,58$.
\end{rem}
\section{$4$-generated symmetric semigroups}
Bresinsky shows in \cite{B} that a semigroup ring associated to a $4$-generated symmetric semigroup that is not
a complete intersection has $\beta_1=5$. This gives the following proposition.

\begin{prop}\label{4sym} If $S$ is a $4$-generated symmetric semigroup which is not a complete
intersection, and $T=\frac{S}{2}$, then $\beta_1(K[T])=6$, $\beta_2(K[T])=10$, $\beta_3(K[T])=6$,
and $\beta_4(K[T])=1$. In particular all $T$ have the same total Betti numbers.
\end{prop}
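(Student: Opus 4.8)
The plan is to reduce everything to two inputs: Bresinsky's count $\beta_1(K[S]) = 5$, recalled just above, and the fact that the semigroup $T$ produced by the construction after Proposition~\ref{half} is symmetric, hence $K[T]$ is Gorenstein (Kunz's theorem). Throughout I write $S = \langle n_1, n_2, n_3, n_4\rangle$; since $S$ is symmetric, $T(S) = \{g\}$ with $g = g(S)$, so for odd $f \geq 3g+1$ the relevant semigroup is $T = \langle 2n_1, 2n_2, 2n_3, 2n_4, f - 2g\rangle$, a $5$-generated numerical semigroup with $S = \frac{T}{2}$. I first record that all five listed generators are minimal and $\gcd = 1$ (here $f - 2g$ is the only odd generator, so it is not an $\mathbb N$-combination of the even ones, and conversely a $2n_i$ cannot involve $f-2g$ by parity), so $K[T] = A/I_T$ with $A = K[X_1,\ldots,X_4,V]$ of codimension $4$.

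The central step is to show $\beta_1(K[T]) = 6$, i.e.\ $\mu(I_T) = 6$. Because $x \mapsto 2x$ is a semigroup isomorphism $S \to 2S := \langle 2n_1,\ldots,2n_4\rangle$, the toric ideals $I_S$ and $I_{2S} \subseteq K[X_1,\ldots,X_4]$ coincide (the binomial relations depend only on the integer linear relations among the generators), so $\mu(I_{2S}) = \beta_1(K[S]) = 5$. Now I analyze a binomial generator with coprime terms $X^\alpha V^a - X^\beta V^b$ (so $\min(a,b)=0$) by parity: since each $2n_i$ is even while $f - 2g$ is odd, equality of degrees forces $a \equiv b \pmod 2$. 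The purely even case $a = b = 0$ gives exactly the relations of $I_{2S}$; the case $\{a,b\} = \{2,0\}$ reduces, modulo $I_{2S}$, to the single binomial $V^2 - m$, where $2(f-2g) \in 2S$ supplies the monomial $m$ exactly as in the proof of Proposition~\ref{7,8,9}; and odd values of $a$ are excluded. Hence $I_T = (I_{2S},\, V^2 - m)$. Minimality of this generating set is the main technical point: the five generators of $I_{2S}$ stay minimal and $V^2 - m$ is a genuine sixth generator. I would confirm this directly — the monomial $V^2$ cannot be produced from $\mathfrak m I_T$, and none of the five $X$-binomials lies in the ideal generated by the remaining four together with $V^2 - m$ — or, if a cleaner argument is wanted, through Proposition~\ref{complex}, by exhibiting for the six relevant degrees $s$ that $\tilde H_0(\Delta_s) \neq 0$. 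This gives $\beta_1(K[T]) = 6$.

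The remaining Betti numbers follow formally. Since $K[T]$ is a $1$-dimensional Cohen--Macaulay ring and $\dim A = 5$, Auslander--Buchsbaum gives $\mathrm{pd}_A K[T] = 4$, so the minimal free resolution has length $4$; Gorensteinness forces type $1$, whence $\beta_4(K[T]) = \beta_0(K[T]) = 1$, and self-duality of the resolution of a Gorenstein quotient gives $\beta_3(K[T]) = \beta_1(K[T]) = 6$. Finally $K[T]$ is a torsion $A$-module (its dimension is $1 < 5$), so the alternating sum of its Betti numbers vanishes: $1 - 6 + \beta_2 - 6 + 1 = 0$, hence $\beta_2(K[T]) = 10$. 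None of the numbers $1, 6, 10, 6, 1$ depends on the choice of the odd integer $f$, so all the infinitely many symmetric $T$ with $S = \frac{T}{2}$ share these total Betti numbers.

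I expect the main obstacle to be the minimality claim in the central step — proving that the five relations of $I_{2S}$ remain minimal inside $I_T$ and that $V^2 - m$ is not redundant — since generation via the parity argument is routine, but the passage to a \emph{minimal} generating set is exactly where an over- or under-count could hide; the homological reformulation via Proposition~\ref{complex} is the safety net I would fall back on.
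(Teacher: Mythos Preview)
Your proposal is correct and follows essentially the same route as the paper's proof: both use the parity argument to conclude $I_T=(I_{2S},V^2-m)$ and hence $\beta_1(K[T])=6$, then invoke Gorenstein symmetry for $\beta_3,\beta_4$ and the vanishing alternating sum for $\beta_2$. You are in fact more careful than the paper about the minimality of the six generators and about justifying the alternating-sum identity, points the paper simply asserts.
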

\begin{proof} Let $S=\langle n_1,n_2,n_3,n_4\rangle$ be symmetric and not a complete
intersection. Then we have the numerical semigroup $T=\langle 2n_1,2n_2,2n_3,2n_4, f-2g\rangle$ with $g=g(S)$. Let  $\psi:K[X_1,X_2,\ldots,X_5]\rightarrow K[T]$ be the $K$-algebra homomorphism defined by $\psi(X_i)=t^{2n_i}$ for $i=1,2,3,4$ and $\psi(X_5)=t^{f-2g}$ and let $I_T=\ker\psi$.
By definitions of $g(S)$ and $f$, we get $X_5^2-f(X_1,\ldots,X_4)\in I_T$. If we compute modulo $X_5^2-f(X_1,\ldots,X_4)$, then we cannot have a relation  $X_5-g(X_1,\ldots,X_4)$ for degree reason and the other relations $X_5f(X_1,\ldots,X_4)-X_5g(X_1,\ldots,X_4)$ follows from the relations $f(X_1,\ldots,X_4)-g(X_1,\ldots,X_4)$ and these are the old relations coming from $S$. Since $\beta_1(K[S])=5$, then $\beta_1(K[T])=6$. Now, since $T$ is symmetric, then $K[T]$ is Gorenstein, so the Betti numbers are symmetric. We have $\beta_4(K[T])=1$ and (from the symmetry) $\beta_3(K[T])=6$. The alternating sum of the Betti numbers is $0$, so $\beta_2(K[T])=10$.
\end{proof}

\begin{rem}\rm As we note above, Bresinsky shows in \cite{B} that a semigroup ring associated to a $4$-generated symmetric semigroup that is not
a complete intersection has $\beta_1=5$. The corresponding is not true for $5$-generated semigroups. Indeed, all the semigroups $T$ in Proposition~\ref{4sym} are $5$-generated symmetric and not complete intersection with $\beta_1(K[T])=6$ but all the semigroups $T$ in Section 4 are $5$-generated symmetric and not complete intersection with $\beta_1(K[T])=9$ (see, Proposition \ref{8}).
\end{rem}

\section{4--irreducible semigroups}

A numerical semigroup is called \textit{irreducible} if it cannot be expressed as an intersection of two numerical semigroups containing it properly. It is known that the family of irreducible numerical semigroups is the union of symmetric and pseudo-symmetric semigroups, \cite{BR}.

In \cite{BR} it is defined the notion of \textit{$m$-irreducibility} which extends the concept of irreducibility when the multiplicity is fixed. More precisely, a numerical semigroup of multiplicity $m$ is called \textit{$m$-irreducible} if it cannot be written as an intersection of two numerical semigroups with multiplicity $m$ properly containing it. In \cite{BR}, the set of $m$-irreducible semigroups is characterized.

\begin{prop}\cite[Proposition 6]{BR} A numerical semigroup $S$ with multiplicity $m$ is $m$-irreducible if and only if one of the following conditions holds:
\begin{enumerate}
	\item $S=\{x\in\mathbb{N}\ :\ x\geq m\}\cup\{0\}.$
	\item $S=\{x\in\mathbb{N}\ :\ x\geq m,\ x\neq F(S)\}\cup\{0\}.$
	\item $S$ is an irreducible numerical semigroup with multiplicity $m$.
\end{enumerate}
\end{prop}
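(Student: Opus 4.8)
I would set $m$ for the multiplicity of $S$ and $F=F(S)$, and write $\Delta_m=\{0\}\cup\{x\in\mathbb N:x\ge m\}$. The guiding observation is that a numerical semigroup has multiplicity $m$ exactly when it is contained in $\Delta_m$ and contains $m$; hence every numerical semigroup of multiplicity $m$ containing $S$ is obtained from $S$ by adjoining some of the gaps of $S$ that are $\ge m$, and $\Delta_m$ is the largest semigroup of multiplicity $m$. I will also use that $F$ is a \emph{special gap} of $S$, i.e. $S\cup\{F\}$ is again a numerical semigroup; when $S\ne\Delta_m$ one has $F\ge m+1$, so $S\cup\{F\}$ still has multiplicity $m$.

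The implications $(1),(2),(3)\Rightarrow m$-irreducible are short. If $S=\Delta_m$ there is no numerical semigroup of multiplicity $m$ strictly containing it, so $S$ is trivially $m$-irreducible. If $S$ is of type $(2)$ its only gap $\ge m$ is $F$, so by the observation above the only semigroup of multiplicity $m$ strictly containing $S$ is $S\cup\{F\}=\Delta_m$; an intersection of two such can only be $\Delta_m\ne S$, so $S$ is $m$-irreducible. Finally, if $S$ is irreducible it cannot be written as an intersection of two strictly larger numerical semigroups at all, in particular not of multiplicity $m$.

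For the converse I argue by contraposition: assume $S$ is not of type $(1)$ or $(2)$ and is not irreducible, and I exhibit an $m$-decomposition. Not being of type $(1)$ or $(2)$ means $S$ has at least two gaps $\ge m$ (one of them being $F$); not being irreducible means there is a ``bad pair'', i.e. gaps $a<b$ with $a+b=F$. The plan is to produce such a bad pair with $b\ge m$, set $T=\langle S\cup\{b\}\rangle$, and check that $S=T\cap(S\cup\{F\})$. Indeed $T$ has multiplicity $m$ (as $b\ge m+1$) and $F\notin T$: writing elements of $T$ as $s+kb$ with $s\in S$, $k\ge0$, the case $k=0$ avoids $F$ since $F\notin S$, the case $k=1$ gives $F-b=a\notin S$, and $k\ge2$ is impossible because $b>F/2$ forces $F-kb<0$. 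Since the only element that $S\cup\{F\}$ adds to $S$ is $F$, and $F\notin T$, the intersection $T\cap(S\cup\{F\})$ equals $S$; as both factors are strictly larger than $S$ and of multiplicity $m$, this shows $S$ is not $m$-irreducible.

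The one step that carries the real content is producing a bad pair with large element, and this is where I expect the only genuine difficulty, namely controlling the multiplicity of the witness. I would split according to the size of $F$. If $F\ge 2m-1$, then any bad pair has larger element $b>F/2\ge m-\tfrac12$, hence $b\ge m$, and reducibility already supplies a bad pair. If instead $F\le 2m-2$, I use that $S$ has a gap $g$ with $m\le g<F$ (a second gap $\ge m$); then $F-g\le m-2$ is positive and $<m$, so $F-g$ is automatically a gap, and $(F-g,\,g)$ is a bad pair with $b=g\ge m$. In both regimes a suitable $b$ exists, completing the argument. The main obstacle is thus not the algebra of the decomposition but exactly this bookkeeping, which guarantees that the enlarging element can always be chosen at or above the multiplicity.
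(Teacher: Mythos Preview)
The paper does not prove this proposition; it is quoted verbatim from \cite[Proposition 6]{BR} and used as a black box. So there is no ``paper's own proof'' to compare against, and your proposal stands or falls on its own.

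Your argument is correct. The forward implications are routine, and your contrapositive for the converse is clean: the key point is producing a gap $b>F/2$ with $b\ge m+1$ so that $T=\langle S\cup\{b\}\rangle$ keeps multiplicity $m$ and omits $F$, whence $S=T\cap(S\cup\{F\})$ is the desired $m$-decomposition. Your case split on $F\ge 2m-1$ versus $F\le 2m-2$ handles this neatly; note that in the second case the existence of a second gap $g$ with $m\le g<F$ already forces $F-g$ to be a gap below $m$, so non-irreducibility is automatic there and the hypothesis ``not (3)'' is only genuinely needed when $F\ge 2m-1$. One cosmetic point: you assert $b\ge m+1$ for the multiplicity check, which follows because $b\ge m$ is a gap while $m\in S$, but it would not hurt to say this explicitly. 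Otherwise the write-up is self-contained and matches the standard special-gap approach used in the Blanco--Rosales paper.
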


For the case of numerical semigroups of multiplicity $4$, the above proposition can be stated as follows:
\begin{cor} A numerical semigroup $S$ of multiplicity $4$ is $4$-irreducible if and only if $S\in\{\langle 4,5,6,7\rangle,\langle 4,6,7,9\rangle\}$ or $S$ is irreducible.
\end{cor}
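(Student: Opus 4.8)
The plan is to specialize the preceding proposition (the characterization of $m$-irreducible semigroups, \cite[Proposition 6]{BR}) to the case $m=4$ and to show that its three alternatives collapse to the two named semigroups together with the irreducible ones.

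First I would dispatch the easy alternatives. Condition (3) reads literally ``$S$ is irreducible of multiplicity $4$'', which is already one of the options in the corollary. For condition (1), I would simply compute $S=\{x\in\mathbb N: x\geq 4\}\cup\{0\}=\{0,4,5,6,7,8,\dots\}$ and observe that its minimal generators are exactly $4,5,6,7$ (each of $5,6,7$ fails to be a sum of smaller nonzero elements, while $8=4+4$, $9=4+5$, and so on), so this alternative yields precisely $S=\langle 4,5,6,7\rangle$. One should also record that $\langle 4,5,6,7\rangle$ is genuinely new, i.e.\ \emph{not} irreducible: its Frobenius number is $3$, so it is neither symmetric (wrong genus) nor pseudo-symmetric (since $F$ is odd).

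The real work is the analysis of condition (2), where $S=\{x\in\mathbb N: x\geq 4,\ x\neq F(S)\}\cup\{0\}$. Here I would first pin down the admissible values of $F:=F(S)$. Since $4$ must lie in $S$ (multiplicity $4$), we need $F\geq 5$; and since $4\in S$ while any $F\geq 8$ would satisfy $F=4+(F-4)$ with $F-4\geq 4$, hence $F-4\in S$ and therefore $F\in S$, a contradiction, we must have $F\leq 7$. Thus $F\in\{5,6,7\}$, giving three explicit semigroups: $F=5$ yields $\langle 4,6,7,9\rangle$, $F=6$ yields $\langle 4,5,7\rangle$, and $F=7$ yields $\langle 4,5,6\rangle$. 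I would then test irreducibility of each: $\langle 4,5,6\rangle$ is symmetric and $\langle 4,5,7\rangle$ is pseudo-symmetric (with $F=6$, $F/2=3$, and $F-1,F-2\in S$), so both already fall under the ``irreducible'' option; by contrast $\langle 4,6,7,9\rangle$ has $F=5$ and is neither symmetric nor pseudo-symmetric, so it is the second genuinely new semigroup.

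Assembling these observations finishes the proof: the $4$-irreducible semigroups are exactly the irreducible ones (covering condition (3) together with the irreducible instances $\langle 4,5,6\rangle$ and $\langle 4,5,7\rangle$ arising in condition (2)) plus the two non-irreducible semigroups $\langle 4,5,6,7\rangle$ (from (1)) and $\langle 4,6,7,9\rangle$ (from (2) with $F=5$). I expect the only delicate point to be the enumeration within condition (2)---namely bounding $F$ between $5$ and $7$ and correctly recognizing which of the resulting three semigroups are already irreducible and hence need not be listed separately.
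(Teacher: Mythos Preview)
Your proof is correct and is exactly the intended specialization of the preceding proposition to $m=4$; the paper states the corollary without proof, treating it as an immediate consequence. Your enumeration of condition~(2) via the bound $5\leq F\leq 7$, and your check that $\langle 4,5,6\rangle$ and $\langle 4,5,7\rangle$ are already irreducible (so that only $\langle 4,6,7,9\rangle$ needs to be listed separately), fill in precisely the details the paper omits.
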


\begin{rem}\label{4irr}
\rm We aim at determining the Betti numbers of the semigroup ring $K[S]$ when $S$ is $4$-irreducible. If $S\in\{\langle 4,5,6,7\rangle,\langle 4,6,7,9\rangle\}$, then $\beta_{i,j}(K[S])$ can be easily determined by using any computer algebra system. 

For the case of irreducible semigroups, one has that they are minimally generated by at most $3$ elements, since they are either symmetric or pseudo-symmetric, so they cannot be of maximal embedding dimension since $4$-generated semigroups of maximal embedding dimension has $|T(S)|=3$ and irreducible have $|T(S)|\le 2$. Since all $3$-generated symmetric numerical semigroups are complete intersection, then the Betti numbers are known as soon as the first Betti numbers are known (indeed $\beta_{i,j}(K[S])$ of $S$ which is a complete intersection is obtained as a sum of $i$ different degrees of the first syzygy) and this is done in \cite{Wa} (see also the proof of Corollary~\ref{b}). For the case of 3-generated pseudo-symmetric semigroups, the Betti numbers are also known (see \cite{He}).
\end{rem}

\section{Acknowledgements}

The authors wish to thank Professors Mats Boij and Ralf Fr\"oberg for the valuable conversations concerning this paper. We
also wish to thank Professor Alfio Ragusa and all other
organizers of PRAGMATIC 2011 for the opportunity to participate and for the pleasant atmosphere they
provided during the summer school.

\end{document}